  \theoremstyle{definition}
  \theoremstyle{plain}
  \newtheorem{prop}{\protect\propositionname}[section]
  \theoremstyle{definition}
  \newtheorem{example}{\protect\examplename}[section]
   \newenvironment{proof}[1][\proofname]{\par
     \normalfont\topsep6\p@\@plus6\p@\relax
     \trivlist
     \itemindent\parindent
     \item[\hskip\labelsep
           \scshape
       #1]\ignorespaces
   }{%
     \endtrivlist\@endpefalse
   }
   \providecommand{\proofname}{Proof}
  \theoremstyle{plain}
  \newtheorem{lem}{\protect\lemmaname}[section]
  \theoremstyle{remark}
  \newtheorem{rem}{\protect\remarkname}[section]
  \theoremstyle{plain}
  \newtheorem{thm}{\protect\theoremname}[section]
  \theoremstyle{plain}
  \newtheorem{cor}{\protect\corollaryname}[section]
\providecommand{\corollaryname}{Corollary}
\providecommand{\definitionname}{Definition}
\providecommand{\examplename}{Example}
\providecommand{\lemmaname}{Lemma}
\providecommand{\propositionname}{Proposition}
\providecommand{\remarkname}{Remark}
\providecommand{\theoremname}{Theorem}
\begin{document}

\title{\textbf{A family of transformed copulas with singular component}}

\author{\textit{Jiehua Xie$^{\rm a}$, \ \ Jingping Yang$^{\rm b}$, \ \ Wenhao Zhu$^{\rm a}$}
  \\
\small  \textit{$^{\rm a}$Department of Financial Mathematics, Peking University, Beijing 100871, P.R.China}\\ \small
\textit{$^{\rm b}$LMEQF, Department of Financial Mathematics, Peking University, Beijing 100871, P.R.China}}

\date{October 2017}
\maketitle
\begin{abstract}
In this paper, we present a family of bivariate copulas by transforming a given copula function with two increasing functions, named as transformed copula. One distinctive characteristic of the transformed copula is its singular component along the main diagonal. Conditions guaranteeing the transformed function to be a copula function are provided, and several classes of the transformed copulas are given. The singular component along the main diagonal of the transformed copula is verified, and the tail dependence coefficients of the transformed copulas are obtained.
Finally, some properties of the transformed copula are discussed, such as the totally positive of order 2 and the concordance order.
\\
\textbf{\textit{Keywords}}\textit{:} Transformed copula; Singular component; Tail dependence.
\end{abstract}

\section{Introduction}

A copula is
a joint distribution function with all uniform [0, 1] marginal distributions. Copula functions have been received a great deal of attentions
due to Sklar's Theorem, providing a description of every joint distribution function about a random vector by its marginal distribution functions and its copula function. For more detailed introduction about copula theory, we refer to \citet{Joe(1997)} and \citet{Nelsen(2006)}.
Nowadays, copula functions play an important role for modeling the dependence structure in insurance, finance, risk management and econometrics \citep{Denuit et al.(2005),McNeil et al.(2005)}.

As a distribution function, a copula can be written as the sum of an absolutely continuous component and a singular component \citep[see, e.g.,][Theorem 2.2.6]{Ash(2000)}. Many popular copula functions, such as the Gaussian copula, the student's $t$ copula and the Farlie-Gumbel-Morgenstern (FGM) copula, do not have singular component. However, the increasing importance and widespread applications of copulas have required to model the dependence structure by using copula functions with singular component. In the credit risk modelling, for modeling the dependence structure between the default times of the two credit entities, the copula function
serves to describe the event that two entities default simultaneously, please refer to \citet{Sun et al. (2010)}, \citet{Mai and Scherer(2014)} and \citet{Bo and Capponi(2015)}. In the engineering applications, for modeling the lifetimes of two components in the same system which is influenced by exogenous shocks, the corresponding lifetimes take the same value if a shock hits two components simultaneously \citep{Navarro and Spizzichino(2010),Navarro et al.(2013)}. The copula functions with the singular component along the main diagonal should be employed to model the
dependence structure in the above applications.

Construction of copula functions with singular component is important for practical applications. The Marshall-Olkin copula is originated from exogenous shock models \citep{Marshall and Olkin(1967)} having singular component. \cite{Durante et al.(2007)} presented a generalization of the Archimedean family of bivariate copulas with a singular component along the main diagonal. \cite{Mai et al.(2016)} provided a family of the copula functions interrelated with the set of exchangeable exogenous shock models, and the copula functions have singular components along the main diagonal. \cite{Xie et al.(2017)} introduced a new family of the multivariate copula functions defined by two generators, which is a generalization of the multivariate Archimedean copula family with a singular component along the main diagonal.

In this paper, we consider a transformed function $C_{\phi,\psi}$ of a given copula $C$ with two functions $\phi$ and $\psi$ as the following:
\begin{equation}\label{c1}C_{\phi,\psi}(u,v)=\phi^{[-1]}(C(\phi(u\wedge v),\psi(u\vee v))),\ u,v\in [0,1],\end{equation}
where $u\wedge v=\min(u,v)$ and $u\vee v=\max(u,v)$.
Our motivation is to construct new families of copulas by applying both the functions $\phi,\psi$ and the copula $C$.
Conditions on the functions $\phi,\psi$ and the copula $C$ are provided to guarantee that the transformation $C_{\phi,\psi}$ is a copula function, and the copula function $C_{\phi,\psi}$ is named as the transformed (TF) copula and $C$ as the base copula. We will show that the TF copula has the following properties:

(1). Singularization. If the base copula is absolutely continuous, the TF copula may have a singular component along the main diagonal to take into account the event that two entities default simultaneously.

(2). Modification of the tail dependence. The tail dependence of the base copula can be changed by choosing two different functions $\phi$ and $\psi$.

(3). Preservation of some orders and properties. The TF copula preserves the common concordance order and the totally positive of order 2 (TP$_{2}$) property of the base copula under some conditions.

(4). Inclusion of many known copula families. The family of TF copulas includes many known bivariate copulas, such as the copula functions presented in \cite{Durante et al.(2007)}, \cite{Mai et al.(2016)} and \cite{Xie et al.(2017)}. The TF copula family can be regarded as a generalization of the above copula functions.

The rest of the paper is organized as follows. In Section 2, we provide sufficient conditions guaranteeing that $C_{\phi,\psi}$ is a copula function, and the existence of a singular component of the TF copula is verified. Several classes of TF copulas are also provided in Section 2. In Section 3, we discuss the tail dependence of the copula function $C_{\phi,\psi}$, and the tail dependence relationship between the copula $C$ and the TF copula is provided. In Section 4, some properties of the TF copula, such as the TP$_{2}$ and the concordance order, are discussed.
Conclusions are drawn in Section 5. Some proofs are put in Appendix.

\section{The family of transformed copulas}

In this section, we first give some notations that will be useful for introducing the function $C_{\phi,\psi}$ defined by \eqref{c1}. The sufficient conditions are provided such that $C_{\phi,\psi}$ is a copula function, and a singular component of the TF copula is verified. Finally, several classes of TF copulas are provided.

\subsection{Preliminaries}

As a multivariate distribution, the copula function has all uniform [0,1] marginal distributions. In the two-dimensional case, there are three important copula functions, the product copula $\Pi(u,v)=uv,u,v\in [0,1]$, the Fr\'{e}chet upper bound $M(u,v)=\min\{u,v\},u,v\in [0,1]$ and the Fr\'{e}chet lower bound  $W(u,v)=\max\{u+v-1, 0\},u,v\in [0,1]$.
It is known that for each bivariate copula $C$,
$$W(u,v) \leq C(u,v)\leq M(u,v),\ (u,v)\in [0,1]^{2}.$$
For details on copulas, please refer to \citet{Nelsen(2006)}.

Next we introduce the notion of supermigrative copula presented in \cite{Durante and Ricci(2012)}.
A bivariate copula $C$ is called supermigrative if it is exchangeable and satisfies that
\begin{equation}\label{d3}C(\alpha x, y)\geq C(x, \alpha y) \end{equation}
for all $\alpha\in[0,1]$ and $x, y\in[0,1]$ with $y\leq x$.
If the inequality in (\ref{d3}) is strict whenever $0<y <x\leq 1$, $C$ is called strictly supermigrative.

The supermigrative was introduced firstly in the
study of the bivariate ageing by \cite{Bassan and Spizzichino(2005)}. Supermigrative copulas are important for constructing stochastic models for the bivariate ageing \citep{Durante and Ricci(2012)}. Some common copulas are supermigrative. For instance, the product copula $\Pi$ and the Fr\'{e}chet upper bound $M$ are supermigrative. The Cuadras-Aug\'{e} bivariate copula ({\color{blue} Cuadras and Aug\'{e}}, {\color{blue} 1981})
\begin{equation}\label{CACOPULA}
C(u,v)=\left\{\begin{array}{ll}
uv^{\alpha},& \ u\leq v,\\
u^{\alpha}v,&  \ u> v,
\end{array}\right.
\end{equation}
with parameter $\alpha\in [0,1]$ is supermigrative. Furthermore, some popular copulas are also supermigrative under certain conditions. An Archimedean copula $C_{\varphi}(u,v)$ with generator $\varphi$
is defined as
\begin{equation} \label{eq:dis} C(u,v)=\varphi^{[-1]}(\varphi(u)+\varphi(v)),\end{equation}
where the generator $\varphi:$ $[0,1]\rightarrow$ $[0,+\infty]$ is a continuous and strictly decreasing convex function  satisfying $\varphi(1)=0$.
The Archimedean copula
is supermigrative if and only if the inverse function of $\varphi$ is log-convex and $\varphi(0)=+\infty$.
A bivariate FGM copula
\begin{equation}\label{FG} C(u,v)=uv(1+\theta (1-u)(1-v))
\end{equation}
is supermigrative if and only if the parameter $\theta\in[0,1]$. A bivariate Gaussian copula $C_{\vartheta}$ with correlation index parameter $\vartheta$ is supermigrative if and only if $\vartheta\geq 0$.

Let $\phi:$ $[0,1]\rightarrow$ $[0,1]$ be a continuous and strictly increasing function with $\phi(1)=1$. When $\phi(0)=0$, the function $\phi$ is a distortion function \citep[see, e.g.,][]{Yaari(1987)}, which is widely used in risk management. The pseudo-inverse of $\phi$ is defined by
$$\phi^{[-1]}(t):=\left\{\begin{array}{ll}
0,& t\in[0,\phi(0)],\\
\phi^{-1}(t),& t\in[\phi(0),1].
\end{array}
\right.$$
It is easy to see that the function $\phi^{[-1]}$ is increasing in [0,1] and strictly increasing in $[\phi(0),1]$. Specially, if $\phi(0)=0$, then the pseudo-inverse function coincides with the inverse function, that is, $\phi^{[-1]}=\phi^{-1}$.

From the definition of the pseudo-inverse $\phi^{[-1]}$, we can see that for all $t\in[0,1]$,
\begin{equation}\label{eq:pphi}\phi^{[-1]}(\phi(t))=t,\end{equation}
and
\begin{equation}\label{eq:pphi1}\phi\left(\phi^{[-1]}(t)\right)=\max\{t,\phi(0)\}.\end{equation}

Finally, we introduce some notations. Denote
$$\Psi:=\{\psi: [0,1]\rightarrow [0,1]\ |\ \psi {\rm\ is\ continuous,\ increasing\ and\ satisfies}\ \psi(1)=1 \},$$
and
$$\Phi:=\{\phi: [0,1]\rightarrow [0,1]\ |\ \phi {\rm\ is\ continuous,\ strictly\ increasing\ and\ satisfies}\ \phi(1)=1 \}.$$
Notice that $\Phi\subset\Psi$.

\subsection{Definition of the TF copula}

Given a base copula $C$ and a pair of functions $(\phi,\psi)\in\Phi\times\Psi$, the transformed function $C_{\phi,\psi}:$ $[0,1]^{2}\rightarrow$ $[0,1]$ is defined by (\ref{c1}).
 From the definition of $C_{\phi,\psi}$, we can see that the copula $C$ and the two functions $\phi$ and $\psi$ are three essential elements of the function $C_{\phi,\psi}$. In the next, we will prove that under some assumptions, the function $C_{\phi,\psi}(u,v)$ is a copula function.

First we give some preliminary lemmas.
\begin{lem}
\label{lem: MO} \citep[Proposition 4.B.2]{Marshall and Olkin(1979)} Let
 $f:A\rightarrow \mathbb{R}$, where $A$ is an interval of $\mathbb{R}$. If $f$ is convex and increasing, then, for each $a_{1}$, $a_{2}$, $a_{3}$, $a_{4}\in A$ such that
$$a_{1}\leq \min(a_{2}, a_{3})\leq \max(a_{2}, a_{3})\leq a_{4},$$
and $a_{1}+a_{4}\geq a_{2}+a_{3}$, we have
$$f(a_{1})+f(a_{4})\geq f(a_{2})+f(a_{3}).$$
 \end{lem}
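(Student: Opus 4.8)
The plan is to reduce to the case of equal sums and then finish with a two-point convexity (majorization) estimate. By the symmetry of both hypothesis and conclusion in $a_{2}$ and $a_{3}$, I may assume without loss of generality that $a_{2}\leq a_{3}$, so the ordering reads $a_{1}\leq a_{2}\leq a_{3}\leq a_{4}$. The key reading of the constraint $a_{1}+a_{4}\geq a_{2}+a_{3}$ is that the outer pair $(a_{1},a_{4})$ has sum no smaller than the inner pair $(a_{2},a_{3})$, and the increasing property of $f$ is exactly what lets me absorb any surplus in that sum.

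Concretely, I would introduce the replacement point $a_{4}':=a_{2}+a_{3}-a_{1}$, chosen so that $a_{1}+a_{4}'=a_{2}+a_{3}$. Since $a_{2}\geq a_{1}$ one gets $a_{4}'-a_{3}=a_{2}-a_{1}\geq 0$, hence $a_{4}'\geq a_{3}$; and from $a_{1}+a_{4}\geq a_{2}+a_{3}=a_{1}+a_{4}'$ one gets $a_{4}'\leq a_{4}$. Because $A$ is an interval and $a_{3}\leq a_{4}'\leq a_{4}$, the point $a_{4}'$ lies in $A$. As $f$ is increasing, $f(a_{4})\geq f(a_{4}')$, so it suffices to establish the sharper statement $f(a_{1})+f(a_{4}')\geq f(a_{2})+f(a_{3})$ in the equal-sum situation $a_{1}+a_{4}'=a_{2}+a_{3}$.

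For this equal-sum case I would express $a_{2}$ and $a_{3}$ as convex combinations of the endpoints $a_{1}$ and $a_{4}'$. Setting $\lambda:=(a_{2}-a_{1})/(a_{4}'-a_{1})\in[0,1]$ (the degenerate case $a_{4}'=a_{1}$ forces all four points equal and is trivial), one has $a_{2}=(1-\lambda)a_{1}+\lambda a_{4}'$, and the equality of sums then gives $a_{3}=\lambda a_{1}+(1-\lambda)a_{4}'$. Convexity of $f$ yields $f(a_{2})\leq(1-\lambda)f(a_{1})+\lambda f(a_{4}')$ and $f(a_{3})\leq\lambda f(a_{1})+(1-\lambda)f(a_{4}')$; adding these two inequalities collapses the weights to one on each term, producing $f(a_{2})+f(a_{3})\leq f(a_{1})+f(a_{4}')$, which is what is needed.

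I expect the one point genuinely worth checking — rather than a real obstacle — to be the reduction to equal sums: one must confirm that $a_{4}'$ stays admissible, i.e. both $a_{4}'\geq a_{3}$ and $a_{4}'\in A$, so that convexity may be applied on $[a_{1},a_{4}']$. It is precisely at this reduction that the hypothesis that $f$ is \emph{increasing}, and not merely convex, enters; once the reduction is in place the equal-sum inequality is an immediate consequence of the definition of convexity.
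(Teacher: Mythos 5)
Your proof is correct. Note that the paper itself gives no proof of this lemma---it is quoted directly from Marshall and Olkin (1979, Proposition 4.B.2) as a known result---so there is no in-paper argument to compare against; your argument is the standard (essentially classical) one for this majorization-type inequality. The structure is sound at every step: the reduction $a_{4}':=a_{2}+a_{3}-a_{1}$ correctly isolates where monotonicity of $f$ is used (to absorb the surplus $a_{4}-a_{4}'\geq 0$), and your checks that $a_{3}\leq a_{4}'\leq a_{4}$, hence $a_{4}'\in A$ since $A$ is an interval, are exactly the admissibility points that need verifying. In the equal-sum case your weights are consistent: with $\lambda=(a_{2}-a_{1})/(a_{4}'-a_{1})$ one indeed has $a_{2}=(1-\lambda)a_{1}+\lambda a_{4}'$ and, using $a_{1}+a_{4}'=a_{2}+a_{3}$, also $a_{3}=\lambda a_{1}+(1-\lambda)a_{4}'$, so summing the two convexity inequalities gives $f(a_{2})+f(a_{3})\leq f(a_{1})+f(a_{4}')$, and the degenerate case $a_{4}'=a_{1}$ is handled correctly (it forces all four points equal). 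One small remark on your final paragraph: it is worth noting that your choice of modifying $a_{4}$ downward, rather than modifying $a_{1}$ upward to $a_{1}'=a_{2}+a_{3}-a_{4}$, is the right one, since $a_{1}'\leq a_{1}$ could fall outside $A$, whereas $a_{4}'$ is trapped between two points of $A$.
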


 \begin{lem}\label{lem: convex} \citep[Page 290]{Stewart(2003)} Let $\phi\in \Phi$. If $\phi$ is a concave function, then $\phi^{[-1]}$ is convex in [0,1].
 \end{lem}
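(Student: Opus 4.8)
The plan is to read off the two-piece structure of $\phi^{[-1]}$ from its definition and prove convexity piece by piece, then argue that the pieces glue into a globally convex function. Write $a:=\phi(0)$. By definition $\phi^{[-1]}(t)=0$ on $[0,a]$ and $\phi^{[-1]}(t)=\phi^{-1}(t)$ on $[a,1]$, where $\phi^{-1}$ is the genuine inverse of the continuous, strictly increasing map $\phi$. On $[0,a]$ the function is identically $0$, hence trivially convex, so the real content of the lemma is the convexity of $\phi^{-1}$ on $[a,1]$.

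First I would establish that $\phi^{-1}$ is convex on $[a,1]$ directly from the concavity of $\phi$, using only monotonicity and no differentiability. Fix $s,t\in[a,1]$ and $\lambda\in[0,1]$, and set $x=\phi^{-1}(s)$, $y=\phi^{-1}(t)$, so that $s=\phi(x)$ and $t=\phi(y)$. Since $\phi$ is concave,
\[
\phi\bigl(\lambda x+(1-\lambda)y\bigr)\ \geq\ \lambda\phi(x)+(1-\lambda)\phi(y)=\lambda s+(1-\lambda)t .
\]
Applying the increasing map $\phi^{-1}$ to both ends and using \eqref{eq:pphi} in the form $\phi^{-1}(\phi(z))=z$ gives
\[
\lambda\phi^{-1}(s)+(1-\lambda)\phi^{-1}(t)=\lambda x+(1-\lambda)y\ \geq\ \phi^{-1}\bigl(\lambda s+(1-\lambda)t\bigr),
\]
which is exactly the convexity inequality for $\phi^{-1}$ on $[a,1]$.

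It remains to glue the constant piece on $[0,a]$ to the convex piece on $[a,1]$. The two pieces agree at the seam, since $\phi^{-1}(a)=\phi^{-1}(\phi(0))=0$ equals the constant value of the left piece, so $\phi^{[-1]}$ is continuous on $[0,1]$. To verify the full convexity inequality $\phi^{[-1]}(\lambda s+(1-\lambda)t)\leq \lambda\phi^{[-1]}(s)+(1-\lambda)\phi^{[-1]}(t)$ for arbitrary $s\leq t$ in $[0,1]$, I would split into cases according to where $s$ and $t$ fall. When $t\leq a$ all three values vanish; when $s\geq a$ the inequality is precisely the one proved above. The genuinely new situation is the mixed case $s\leq a\leq t$, where $\phi^{[-1]}(s)=0$ and the claim becomes $\phi^{[-1]}\bigl(\lambda s+(1-\lambda)t\bigr)\leq(1-\lambda)\phi^{-1}(t)$: here one uses $\lambda s+(1-\lambda)t\leq \lambda a+(1-\lambda)t$ together with monotonicity of $\phi^{-1}$ and the convexity inequality applied at the endpoints $a$ and $t$ (recalling $\phi^{-1}(a)=0$) to collapse it to the already-proved estimate.

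The hard part, and the only place where care is needed, is exactly this seam at $a=\phi(0)$ in the case $\phi(0)>0$, i.e. when $\phi^{[-1]}$ genuinely has a flat initial segment: one must check that prepending the constant $0$ to the convex curve $\phi^{-1}$ does not break convexity at the junction. Intuitively this holds because the slope does not decrease across $a$ (the left piece has slope $0$, while the one-sided slope of $\phi^{-1}$ at $a^{+}$ equals $1/\phi'(0^{+})\geq 0$), but since the lemma assumes only continuity and concavity rather than differentiability, I would make this rigorous through the case analysis above rather than through derivatives. When $\phi(0)=0$ the pseudo-inverse coincides with $\phi^{-1}$ and only the second paragraph is required.
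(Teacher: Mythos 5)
Your proof is correct. Note that the paper offers no proof at all for this lemma: it is stated with a citation to Stewart's calculus text (page 290), which covers the standard fact that the inverse of an increasing concave bijection is convex --- essentially your second paragraph, where concavity $\phi(\lambda x+(1-\lambda)y)\geq \lambda\phi(x)+(1-\lambda)\phi(y)$ plus monotonicity of $\phi^{-1}$ yields the convexity inequality without any differentiability assumption. What you add beyond the cited source is genuinely needed and is the right thing to worry about: the lemma is stated for the \emph{pseudo}-inverse $\phi^{[-1]}$, which has a flat segment on $[0,\phi(0)]$ when $\phi(0)>0$, and a textbook statement about genuine inverses does not cover the junction. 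Your case analysis handles it cleanly --- the only nontrivial case $s\leq a\leq t$ reduces, via monotonicity of $\phi^{[-1]}$ and the convexity inequality at the endpoints $a$ and $t$ with $\phi^{-1}(a)=0$, to the already-proved estimate --- and your decision to avoid the one-sided-slope intuition in favor of this reduction is correct, since $\phi$ need not be differentiable. In short: your argument is complete and self-contained where the paper delegates to a reference, and it closes the small gap between the cited textbook fact and the statement actually used.
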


In the following, we will provide sufficient conditions on the functions $\phi$, $\psi$ and the copula $C$ such that the function $C_{\phi,\psi}$ of type (\ref{c1}) is a bivariate copula.
\begin{thm}
\label{thm:Distorted copula} Let $C$ be an arbitrary bivariate copula. Assume that $(\phi,\psi)\in\Phi\times\Psi$ and $\phi$ is a concave function. If the inequality
\begin{equation}\label{d1}C(\phi(u),\psi(v))\leq C(\phi(v),\psi(u))\end{equation}
holds for all $u\leq v, (u,v)\in[0,1]^{2}$, then the function $C_{\phi,\psi}(u,v)$ of type (\ref{c1}) is a bivariate copula. Specially, if $C$ is supermigrative and $\phi/\psi$ is increasing in [0,1], then $C_{\phi,\psi}$ is a copula function.
\end{thm}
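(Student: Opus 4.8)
The plan is to verify the three defining properties of a bivariate copula for $C_{\phi,\psi}$: the grounding conditions, the uniform marginals, and the $2$-increasing property. The first two are routine. For grounding, when $v=0$ one has $u\wedge v=0$, so $C_{\phi,\psi}(u,0)=\phi^{[-1]}(C(\phi(0),\psi(u)))$, and since $C(\phi(0),\psi(u))\le C(\phi(0),1)=\phi(0)$, the definition of the pseudo-inverse forces this to equal $0$. For the marginals, setting $v=1$ gives $u\vee v=1$ and $\psi(1)=1$, so $C_{\phi,\psi}(u,1)=\phi^{[-1]}(C(\phi(u),1))=\phi^{[-1]}(\phi(u))=u$ by \eqref{eq:pphi}; the case $u=1$ is symmetric. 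I would also record that $C_{\phi,\psi}$ is exchangeable, since interchanging $u$ and $v$ leaves $u\wedge v$ and $u\vee v$ unchanged.

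The core of the proof is nonnegativity of the volume $V(u_1,u_2;v_1,v_2)=C_{\phi,\psi}(u_2,v_2)-C_{\phi,\psi}(u_2,v_1)-C_{\phi,\psi}(u_1,v_2)+C_{\phi,\psi}(u_1,v_1)$ of every rectangle $[u_1,u_2]\times[v_1,v_2]$. I would first treat the auxiliary function $A(u,v):=\phi^{[-1]}(C(\phi(u),\psi(v)))$, which is the formula for $C_{\phi,\psi}$ on the region $u\le v$. Writing $a=C(\phi(u_1),\psi(v_1))$, $b=C(\phi(u_1),\psi(v_2))$, $c=C(\phi(u_2),\psi(v_1))$, $d=C(\phi(u_2),\psi(v_2))$, monotonicity of $C,\phi,\psi$ gives $a\le\min(b,c)\le\max(b,c)\le d$, while the $2$-increasingness of $C$ gives $a+d\ge b+c$. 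Since $\phi$ is concave, $\phi^{[-1]}$ is convex and increasing by Lemma \ref{lem: convex}, so Lemma \ref{lem: MO} applied to $f=\phi^{[-1]}$ yields $\phi^{[-1]}(a)+\phi^{[-1]}(d)\ge\phi^{[-1]}(b)+\phi^{[-1]}(c)$; that is, $A$ is $2$-increasing on all of $[0,1]^2$.

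Next I would reduce the general rectangle to this case together with a single ``diagonal square''. Because $C_{\phi,\psi}$ is exchangeable and equals $A(u,v)$ for $u\le v$ and $A(v,u)$ for $u\ge v$, any rectangle lying entirely on one side of the diagonal has nonnegative volume by the previous paragraph (using reflection for the lower side). For a rectangle meeting the diagonal I would set $p=\max(u_1,v_1)$ and $q=\min(u_2,v_2)$; inserting the grid lines $u,v\in\{p,q\}$ partitions the rectangle into subrectangles that all lie off the diagonal except the single square $[p,q]\times[p,q]$, and the volume is additive over this partition. It then remains to show $V(p,q;p,q)=C_{\phi,\psi}(q,q)-2C_{\phi,\psi}(p,q)+C_{\phi,\psi}(p,p)\ge0$. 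Setting $a_1=C(\phi(p),\psi(p))$, $a_4=C(\phi(q),\psi(q))$, $a_2=a_3=C(\phi(p),\psi(q))$, the outer ordering again follows from monotonicity, while the crucial inequality $a_1+a_4\ge a_2+a_3$ comes from the $2$-increasingness of $C$ on $[\phi(p),\phi(q)]\times[\psi(p),\psi(q)]$ followed by the hypothesis \eqref{d1} in the form $C(\phi(p),\psi(q))\le C(\phi(q),\psi(p))$. One more application of Lemma \ref{lem: MO} then gives $V(p,q;p,q)\ge0$. This diagonal-square step is the main obstacle and the only place where \eqref{d1} enters.

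Finally, for the ``Specially'' clause I would show that supermigrativity of $C$ together with $\phi/\psi$ increasing implies \eqref{d1}. The key observation is that $\phi/\psi$ increasing with $\phi(1)=\psi(1)=1$ forces $\phi(t)\le\psi(t)$ for all $t$, and also $\phi(u)/\phi(v)\le\psi(u)/\psi(v)$ for $u\le v$. Writing $x=\phi(v)$, $y=\psi(v)$ (so $x\le y$), $\alpha=\phi(u)/\phi(v)$ and $\beta=\psi(u)/\psi(v)$ (so $\alpha\le\beta$), the target \eqref{d1} becomes $C(\alpha x,y)\le C(x,\beta y)$. Monotonicity in the first argument gives $C(\alpha x,y)\le C(\beta x,y)$, and then exchangeability together with the supermigrative inequality \eqref{d3}, applied to the pair $(y,x)$ with multiplier $\beta$ (legitimate since $x\le y$), gives $C(\beta x,y)\le C(x,\beta y)$; chaining these proves \eqref{d1}, and hence $C_{\phi,\psi}$ is a copula.
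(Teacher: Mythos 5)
Your proof is correct and follows essentially the same route as the paper: boundary checks, reduction of an arbitrary rectangle to off-diagonal rectangles plus a single diagonal square, Lemma \ref{lem: convex} and Lemma \ref{lem: MO} applied to $\phi^{[-1]}$, with \eqref{d1} entering only in the diagonal-square step, and monotonicity plus \eqref{d3} plus exchangeability for the supermigrative clause. The only (harmless) differences are that you make explicit, via the grid lines at $p=\max(u_1,v_1)$ and $q=\min(u_2,v_2)$, the decomposition the paper merely asserts, and that in the supermigrative step you apply \eqref{d3} with multiplier $\beta=\psi(u)/\psi(v)$ to the pair $(\psi(v),\phi(v))$, whereas the paper uses the multiplier $\phi(u)/\psi(u)$ on $(\psi(v),\psi(u))$ --- the same ingredients in a slightly different factorization.
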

\begin{proof}
(1). First we prove that when \eqref{d1} holds, $C_{\phi,\psi}$ is a copula function.

For each $u\in[0,1]$,
$$C_{\phi,\psi}(u,1)=\phi^{[-1]}(C(\phi(u),\psi(1)))=\phi^{[-1]}(\phi(u))=u,$$
and
$$0\leq C_{\phi,\psi}(u,0)=\phi^{[-1]}(C(\phi(0),\psi(u)))\leq \phi^{[-1]}(C(\phi(0),\psi(1)))=\phi^{[-1]}(\phi(0))=0.$$
Similarly, for each $v\in[0,1]$,
$$C_{\phi,\psi}(1,v)=v,\quad C_{\phi,\psi}(0,v)=0.$$

 Now, it suffices to prove that $C_{\phi,\psi}$ is 2-increasing, i.e., for every subset $R=[u_{1},u_{2}]\times[v_{1},v_{2}]$ in the unit square,
\begin{equation}V_{C_{\phi,\psi}}(R)=\phi^{[-1]}(a_{1})+\phi^{[-1]}(a_{4})-\phi^{[-1]}(a_{2})-\phi^{[-1]}(a_{3})\geq 0,\label{VCF}
\end{equation}
where
$$a_{1}=C(\phi(u_{1}\wedge v_{1}),\psi(u_{1}\vee v_{1})),\ \ a_{2}=C(\phi(u_{2}\wedge v_{1}),\psi(u_{2}\vee v_{1})),$$
$$a_{3}=C(\phi(u_{1}\wedge v_{2}),\psi(u_{1}\vee v_{2})),\ \ a_{4}=C(\phi(u_{2}\wedge v_{2}),\psi(u_{2}\vee v_{2})).$$
In order to show \eqref{VCF}, we will focus on the following three special cases which are illustrated in Figure \ref{fig:case}:
 \begin{figure}[htbp]
 \centering
\includegraphics[scale=0.586]{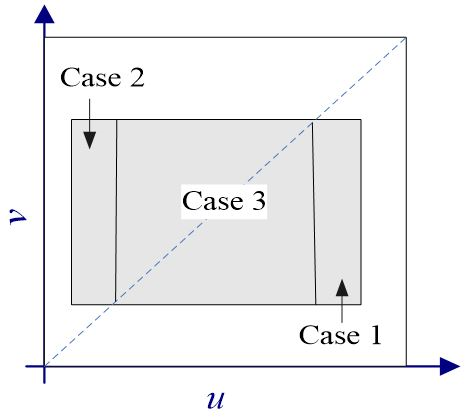}
\caption{Three special cases of the subsets $R=[u_{1},u_{2}]\times[v_{1},v_{2}]$ in the unit square.
\label{fig:case}}
\end{figure}
 \begin{itemize}
\item Case 1:  $R$ is a rectangle contained in the triangular region $G^{-}:=\{(u,v)\in[0,1]^{2}|u\geq v\}$ entirely.
\item Case 2: $R$ is a rectangle contained in the triangular region $G^{+}:=\{(u,v)\in[0,1]^{2}|u\leq v\}$ entirely.
\item Case 3: The diagonal of $R$ lies on the diagonal of the unit square.
\end{itemize}
Note that each rectangle $R$ in $[0,1]^{2}$ can be decomposed as the union of at most three non-intersect sub-rectangles $R_{i}$ of the above cases. Furthermore, $V_{C_{\phi,\psi}}(R)$ is the sum of the values $V_{C_{\phi,\psi}}(R_{i})$. Hence, we can verify that $C_{\phi,\psi}$ is 2-increasing if $C_{\phi,\psi}$ is 2-increasing in the above three cases.

In the next, we only need to prove that $C_{\phi,\psi}$ is 2-increasing in the above three cases. Since $C_{\phi,\psi}$ is exchangeable, then the arguments of the first two cases are similar. We only prove that $C_{\phi,\psi}$ is 2-increasing in Case 1 and Case 3.

\begin{itemize}
\item Case 1. Since $\phi$ and $\psi$ are increasing in [0,1], then $0\leq\phi(v_{1})\leq \phi(v_{2})\leq 1$ and $0\leq\psi(u_{1})\leq \psi(u_{2})\leq 1$ for $0\leq v_{1}\leq v_{2}\leq 1, 0\leq u_{1}\leq u_{2}\leq 1$. In this case,
   $$a_{1}=C(\phi(v_{1}),\psi(u_{1})),\ \ a_{2}=C(\phi(v_{1}),\psi(u_{2})),\ \ a_{3}=C(\phi(v_{2}),\psi(u_{1})),\ \ a_{4}=C(\phi(v_{2}),\psi(u_{2})).$$
Since the copula function $C$ is increasing in every variable, then $a_{1}\leq \min(a_{2}, a_{3})\leq \max(a_{2}, a_{3})\leq a_{4}$ follows. Furthermore, using the fact that $C$ is a bivariate copula and consequently $C$ is 2-increasing,
we have that $a_{1}+a_{4}\geq a_{2}+a_{3}$. Now, noting that $\phi$ is concave in [0,1], from Lemma \ref{lem: convex}, we know that $\phi^{[-1]}$ is convex in [0,1]. Then applying Lemma \ref{lem: MO} to the function $\phi^{[-1]}$, we can get \eqref{VCF}.

\item Case 3. In this case, $u_{1}=v_{1}$ and $u_{2}=v_{2}$. For $u_{1}\leq u_{2}$, $a_{1}=C(\phi(u_{1}),\psi(u_{1}))$,
$a_{2}=a_{3}=C(\phi(u_{1}),\psi(u_{2}))$ and $a_{4}=C(\phi(u_{2}),\psi(u_{2}))$. It is easy to see that $a_{1}\leq a_{2}=a_{3}\leq a_{4}$. Using the properties that $C$ is 2-increasing as well as $\phi$ and $\psi$ are increasing in [0,1], we have
\begin{equation}\label{d2}C(\phi(u_{2}),\psi(u_{2}))+C(\phi(u_{1}),\psi(u_{1}))-C(\phi(u_{1}),\psi(u_{2}))-C(\phi(u_{2}),\psi(u_{1}))\geq0 \end{equation}
for $0\leq u_{1}\leq u_{2}\leq1$. From the assumption (\ref{d1}) and the inequality (\ref{d2}), we have
\begin{eqnarray*}
a_{1}+a_{4}&=&C(\phi(u_{2}),\psi(u_{2}))+C(\phi(u_{1}),\psi(u_{1}))\\
&\geq& C(\phi(u_{1}),\psi(u_{2}))+C(\phi(u_{2}),\psi(u_{1}))\\
&\geq& C(\phi(u_{1}),\psi(u_{2}))+C(\phi(u_{1}),\psi(u_{2}))\\
&=&a_{2}+a_{3}.
\end{eqnarray*}
Since $\phi^{[-1]}$ is increasing and convex, Lemma \ref{lem: MO} ensures that $V_{C_{\phi,\psi}}(R)\geq0$.
\end{itemize}

\noindent (2). Now we assume that $C$ is a supermigrative copula and $\phi/\psi$ is increasing in [0,1]. In order to prove that $C_{\phi,\psi}$ is a bivariate copula, we only need to show that under the assumptions, $C(\phi(u),\psi(v))\leq C(\phi(v),\psi(u))$ holds for all $u\leq v$, $(u,v)\in[0,1]^{2}$.

Since $(\phi,\psi)\in\Phi\times\Psi$ and $\phi/\psi$ is increasing in [0,1], then for all $t\in[0,1]$, $\frac{\phi(t)}{\psi(t)}\leq\frac{\phi(1)}{\psi(1)}=1$, thus $\phi(t)\leq\psi(t)$ follows. Then for all $u\leq v$, $(u,v)\in[0,1]^{2}$,
$$C(\phi(v),\psi(u))=C\left(\frac{\phi(v)}{\psi(v)}\psi(v),\psi(u)\right)\geq C\left(\frac{\phi(u)}{\psi(u)}\psi(v),\psi(u)\right)$$
$$\ \ \ \ \ \ \ \ \ \ \ \ \ \ \ \ \ \ \ \ \ \ \ \ \ \ \ \ \ \ \ \ \ \ \ \ \ \ \ \ \geq C\left(\psi(v),\frac{\phi(u)}{\psi(u)}\psi(u)\right)=C(\psi(v),\phi(u))=C(\phi(u),\psi(v)),$$
where the first inequality holds because $\phi/\psi$ is increasing in [0,1] and $C$ is increasing in every variable, the second inequality follows from (\ref{d3}), and the last equation holds because $C$ is exchangeable by its supermigrative property. Thus \eqref{d1} holds and $C_{\phi,\psi}$ is a copula function.
\end{proof}

\begin{rem}\label{re:compare}One advantage of the TF copula is that the copula family includes many known bivariate copulas. Letting the base copula $C$ is exchangeable and $\phi=\psi$, then the TF copula $C_{\phi,\psi}$ coincides with the copula $C_{\phi}$ given by
 \begin{equation}
C_{\phi}(u,v)=\phi^{[-1]}(C(\phi(u),\phi(v))).\label{eq:dis+}
\end{equation}
Please see \citet{Genest and Rivest(2001)}, \citet{Klement et al.(2005)}, \citet{Morillas(2005)}, \citet{Alvoni et al. (2009)} and \citet{Durante et al.(2010)} for the detailed discussion on the copula $C_{\phi}$. The Archimedean copula with multiplicative generator is also a special case of (\ref{eq:dis+}) with $C=\Pi$.

The Cuadras-Aug\'{e} copula is a special TF copula where the base copula $C$ is the product copula and $\phi(u)=u, \psi(u)=u^{\alpha}, \alpha\in[0,1]$.
In particular, let the base copula $C$ be the product copula $\Pi$, then the TF-product copula is simplified as
 \begin{equation}\Pi_{\phi,\psi}(u,v)=\phi^{[-1]}(\phi(u\wedge v)\cdot\psi(u\vee v)).\label{eq:dis++}
\end{equation}
Note that the function $\Pi_{\phi,\psi}$ coincides with the generalization of Archimedean
copula introduced by \cite{Durante et al.(2007)}.


\end{rem}\

\begin{rem}\label{re:inequality} When $C(\phi(u),\psi(v))\leq C(\phi(v),\psi(u))$ for all $u\leq v, (u,v)\in[0,1]^{2}$, using $\phi(1)=\psi(1)=1$, for $u\in[0,1]$, we have
$$\phi(u)=C(\phi(u),\psi(1))\leq C(\phi(1),\psi(u))\leq C(1,\psi(u))=\psi(u).$$
\end{rem}\

Given a bivariate copula $C$, we denote
$$\mathfrak{S}(C)=\{(\phi,\psi)\in\Phi\times\Psi|\ C(\phi(u),\psi(v))\leq C(\phi(v),\psi(u))\ {\rm
holds\ for\ all}\ u\leq v,\ (u,v)\in[0,1]^{2}\}$$ and let $\mathfrak{F}(C)$ be the family of all functions $(\phi,\psi)\in\Phi\times\Psi$ such that
$C_{\phi,\psi}$ is a copula. From Theorem \ref{thm:Distorted copula}, we know that if $(\phi,\psi)\in \mathfrak{S}(C)$ and $\phi$ is concave,
     then $(\phi,\psi)\in \mathfrak{F}(C)$. In the following, some properties of $\mathfrak{S}(C)$ and the relationships between $\mathfrak{S}(C)$ and $\mathfrak{F}(C)$ are presented, which will allow us to find more generators $(\phi,\psi)$ from $\mathfrak{S}(C)$ such that $C_{\phi,\psi}$
is a copula.

\begin{prop}
\label{prop: Tp1} Let $C$ be a bivariate copula. The following statements hold:

(1). If $(\phi,\psi)\in \mathfrak{S}(C)$ and $f\in \Phi$, then $(\phi\circ f,\psi\circ f)\in \mathfrak{S}(C)$. Furthermore, if $\phi\circ f$ is concave in [0,1], then $(\phi\circ f,\psi\circ f)\in \mathfrak{F}(C)$.

(2). If $(\phi_{1},\psi)\in \mathfrak{S}(C)$ and $(\phi_{2},\psi)\in \mathfrak{S}(C)$, then $(\max\{\phi_{1}, \phi_{2}\},\psi)\in \mathfrak{S}(C)$. Furthermore, if $\max\{\phi_{1}, \phi_{2}\}$ is concave in [0,1], then $(\max\{\phi_{1}, \phi_{2}\},\psi)\in \mathfrak{F}(C)$.

(3). If $(\phi,\psi_{1})\in \mathfrak{S}(C)$ and $(\phi,\psi_{2})\in \mathfrak{S}(C)$, then $(\phi, \max\{\psi_{1}, \psi_{2}\})\in \mathfrak{S}(C)$. Furthermore, if $\phi$ is concave in [0,1], then $(\phi, \max\{\psi_{1}, \psi_{2}\})\in \mathfrak{F}(C)$.
\end{prop}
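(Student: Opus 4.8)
The plan is to prove each part in two stages: first establish the claimed membership in $\mathfrak{S}(C)$, and then invoke Theorem \ref{thm:Distorted copula} for the ``furthermore'' statement about $\mathfrak{F}(C)$. Once a pair is shown to lie in $\mathfrak{S}(C)$ and the relevant first component is concave, the inclusion in $\mathfrak{F}(C)$ is immediate from Theorem \ref{thm:Distorted copula}, so those conclusions require no separate work.

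Before checking the defining inequality \eqref{d1}, I would verify that the constructed functions still belong to $\Phi$ or $\Psi$. For (1), both $\phi\circ f$ and $\psi\circ f$ are continuous, with $\phi\circ f$ strictly increasing as a composition of strictly increasing maps and $\psi\circ f$ increasing; both take the value $1$ at $1$ since $f(1)=1$. For (2) and (3), the key observation is that the pointwise maximum of two strictly increasing (resp.\ increasing) functions is again strictly increasing (resp.\ increasing): if $u<v$ and $\phi(u)=\phi_{i}(u)$, then $\phi(v)\ge\phi_{i}(v)>\phi_{i}(u)=\phi(u)$. Continuity and the value $1$ at $1$ are clear, so $\max\{\phi_{1},\phi_{2}\}\in\Phi$ and $\max\{\psi_{1},\psi_{2}\}\in\Psi$.

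The inequality in (1) would follow by a change of variables. Writing $u'=f(u)$ and $v'=f(v)$, monotonicity of $f$ gives $u'\le v'$ whenever $u\le v$, and the defining inequality of $\mathfrak{S}(C)$ for $(\phi,\psi)$ applied at $(u',v')$ is exactly the inequality required of $(\phi\circ f,\psi\circ f)$ at $(u,v)$. The heart of the proposition is (2) and (3), and here I expect the main obstacle to be that the index realizing the maximum may differ at $u$ and at $v$. I would resolve this with a single device. For (2), fix $u\le v$ and choose $i$ with $\phi(u)=\phi_{i}(u)$; the $\mathfrak{S}(C)$ inequality for $(\phi_{i},\psi)$ gives $C(\phi_{i}(u),\psi(v))\le C(\phi_{i}(v),\psi(u))$, and since $\phi_{i}(v)\le\max\{\phi_{1}(v),\phi_{2}(v)\}=\phi(v)$ with $C$ increasing in its first argument, the right-hand side is at most $C(\phi(v),\psi(u))$, which yields the claim.

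Part (3) is symmetric: choosing $k$ with $\psi(v)=\psi_{k}(v)$, the $\mathfrak{S}(C)$ inequality for $(\phi,\psi_{k})$ together with $\psi_{k}(u)\le\psi(u)$ and monotonicity of $C$ in its second argument gives $C(\phi(u),\psi(v))\le C(\phi(v),\psi_{k}(u))\le C(\phi(v),\psi(u))$. In each case the trick is to apply the defining inequality for whichever component attains the maximum on the left-hand side and then absorb the resulting gap on the right-hand side using monotonicity of $C$. Finally, the three $\mathfrak{F}(C)$ assertions follow at once from Theorem \ref{thm:Distorted copula} under the stated concavity hypotheses.
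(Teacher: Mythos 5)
Your proposal is correct and takes essentially the same approach as the paper: verify membership in $\Phi\times\Psi$, establish the defining inequality \eqref{d1} by a change of variables in (1) and by monotonicity of $C$ combined with the component-wise inequalities in (2) and (3), then invoke Theorem \ref{thm:Distorted copula} for the $\mathfrak{F}(C)$ conclusions. Your index-selection device is merely a rephrasing of the paper's identity $C(\max\{\phi_{1}(u),\phi_{2}(u)\},\psi(v))=\max\{C(\phi_{1}(u),\psi(v)),C(\phi_{2}(u),\psi(v))\}$, so the two arguments coincide (and you in fact write out part (3), which the paper leaves as ``similarly'').
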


\begin{proof} (1). From the assumption that $f\in \Phi$, we know that $f$ is a strictly increasing function in [0,1] satisfying $f(1)=1$. Then $f(u)\leq f(v)$ for $0\leq u\leq v\leq 1$ and $(\phi\circ f, \psi\circ f)\in \Phi\times\Psi$. Since $(\phi,\psi)\in \mathfrak{S}(C)$, then for all $u\leq v$, $(u,v)\in[0,1]^{2}$, it holds that
$$C(\phi\circ f(u),\psi\circ f(v))=C(\phi(f(u)),\psi(f(v)))\leq C(\phi(f(v)),\psi(f(u)))=C(\phi\circ f(v),\psi\circ f(u)).$$
Thus $(\phi\circ f,\psi\circ f)\in \mathfrak{S}(C)$. Furthermore, if $\phi\circ f$ is concave in [0,1], then the assumptions of Theorem \ref{thm:Distorted copula} are satisfied such that $(\phi\circ f,\psi\circ f)\in \mathfrak{F}(C)$. \\
(2). From the assumptions that $(\phi_{1},\psi)\in \mathfrak{S}(C)$ and $(\phi_{2},\psi)\in \mathfrak{S}(C)$, we easily know that $\max\{\phi_{1}, \phi_{2}\}$ is a continuous and strictly increasing function in [0,1] satisfying $\max\{\phi_{1}(1), \phi_{2}(1)\}=1$ such that $(\max\{\phi_{1}, \phi_{2}\},\psi)\in\Phi\times\Psi$. From the assumptions, we also know that $C(\phi_{i}(u),\psi(v))\leq C(\phi_{i}(v),\psi(u))$ for all $u\leq v$, $(u,v)\in[0,1]^{2}$ and $i\in\{1,2\}$.
Then for all $u\leq v$, $(u,v)\in[0,1]^{2}$, it holds that
$$C(\max\{\phi_{1}(u), \phi_{2}(u)\},\psi(v))=\max\{C(\phi_{1}(u),\psi(v)), C(\phi_{2}(u),\psi(v))\}\ \ \ \ \ \ \ \ \ \ \ \ \ \ \ \ \ \ \ \ \ \ \ \ \ \ \ \ \ \ \ \ \ \ \ \ \ \ \ \ \ \ \ \ \ \ \ $$
$$\ \ \ \ \ \ \ \ \ \ \ \ \ \ \ \ \ \ \ \ \ \ \ \ \ \ \ \ \ \ \ \ \ \ \ \ \ \ \ \ \ \ \ \ \ \ \ \ \ \ \ \leq\max\{C(\phi_{1}(v),\psi(u)), C(\phi_{2}(v),\psi(u))\}=C(\max\{\phi_{1}(v), \phi_{2}(v)\},\psi(u)).$$
Hence $(\max\{\phi_{1}, \phi_{2}\},\psi)\in \mathfrak{S}(C)$. Furthermore, when $\max\{\phi_{1}, \phi_{2}\}$ is concave in [0,1], the assumptions of Theorem \ref{thm:Distorted copula} are satisfied, thus $(\max\{\phi_{1}, \phi_{2}\},\psi)\in \mathfrak{F}(C)$.

 (3). It can be proved similarly as in (2).
\end{proof}

\subsection{A singular component of the TF copula}

In this section, it is verified that the TF copula $C_{\phi,\psi}$ may have a singular component even if the base copula $C$ is absolutely continuous. It is interesting to note that, if $C$ is absolutely continuous, then the TF copula $C_{\phi,\psi}$ has both singular and absolutely continuous components under some conditions. A singular component of the TF copula can be identified as in the following theorem.
 \begin{thm}
\label{thm:singular} Let $C$ be a copula function with second order continuous derivatives, $(\phi,\psi)\in\mathfrak{S}(C)$,  $\phi$ and $\psi$ be  differentiable in [0,1] and $\phi$ be a concave function. Denote $$S_{1}=\{(u,v)\in[0,1]^{2}|C(\phi(u\wedge v),\psi(u\vee v))>\phi(0)\}$$
and
$$S_{2}=\{(u,u)|C_{1}(\phi(u),\psi(u))\phi'(u)>C_{2}(\phi(u),\psi(u))\psi'(u),\ u\in[0,1]\},$$
where $C_{1}(u,v)=\frac{\partial C(u,v)}{\partial u}$ and $C_{2}(u,v)=\frac{\partial C(u,v)}{\partial v}$. Let $S=S_{1}\cap S_{2}$.

\noindent (1). If $S\neq\emptyset$, then the singular set of $C_{\phi,\psi}$ contains $S$.

\noindent (2). If $\phi(0)=0$ and $S\neq\emptyset$, then the singular component of the copula $C_{\phi,\psi}$ is concentrated on the main diagonal $\{(u,u)| u\in [0,1]\}$. Moreover, for the random variables $U$ and $V$ with the joint distribution $C_{\phi,\psi}$, we have
\begin{equation}\label{scp}\mathbb{P}(U=V)=S_{C_{\phi,\psi}}(1,1)=1-A_{C_{\phi,\psi}}(1,1),\end{equation}
where $S_{C_{\phi,\psi}}$ and $A_{C_{\phi,\psi}}$ are the singular component and the absolutely continuous component of $C_{\phi,\psi}$ respectively, i.e.,
\begin{equation}\label{dscp} C_{\phi,\psi}(u,v)=S_{C_{\phi,\psi}}(u,v)+A_{C_{\phi,\psi}}(u,v),\ \ {\rm and}\ \ A_{C_{\phi,\psi}}(u,v)=\int_{0}^{u}\int_{0}^{v}\frac{\partial^{2} C_{\phi,\psi}(s,t)}{\partial s\partial t}{\rm d}t{\rm d}s. \end{equation} \end{thm}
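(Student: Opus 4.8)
The plan is to exploit the two-branch structure of $C_{\phi,\psi}$: on the open lower triangle $G^{-}=\{u>v\}$ one has $C_{\phi,\psi}(u,v)=\phi^{[-1]}(C(\phi(v),\psi(u)))$, while on the open upper triangle $G^{+}=\{u<v\}$ one has $C_{\phi,\psi}(u,v)=\phi^{[-1]}(C(\phi(u),\psi(v)))$. First I would observe that on each open triangle $C_{\phi,\psi}$ is a composition of the $C^{2}$ copula $C$, the differentiable maps $\phi,\psi$, and the pseudo-inverse $\phi^{[-1]}$; on the range where $C(\cdot,\cdot)>\phi(0)$ the map $\phi^{[-1]}$ agrees with $\phi^{-1}$ and is smooth there (note $\phi'>0$ on $[0,1)$, since a concave strictly increasing $\phi$ cannot have $\phi'=0$ at an interior point without being constant thereafter). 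Consequently, off the main diagonal the measure induced by $C_{\phi,\psi}$ is absolutely continuous with density $\partial^{2}C_{\phi,\psi}/\partial u\partial v$; this is what localizes the singular mass on the diagonal in part (2).

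Next I would compute the one-sided partial derivatives of $C_{\phi,\psi}$ across the diagonal. Fixing the first coordinate at $t$ and letting the second approach $t$, the chain rule gives, from the $G^{-}$ side,
\[\partial_{2}C_{\phi,\psi}(t,t^{-})=(\phi^{[-1]})'(C(\phi(t),\psi(t)))\,C_{1}(\phi(t),\psi(t))\,\phi'(t)=:p(t),\]
and from the $G^{+}$ side,
\[\partial_{2}C_{\phi,\psi}(t,t^{+})=(\phi^{[-1]})'(C(\phi(t),\psi(t)))\,C_{2}(\phi(t),\psi(t))\,\psi'(t)=:q(t),\]
so that the jump is
\[J(t)=p(t)-q(t)=(\phi^{[-1]})'(C(\phi(t),\psi(t)))\big[C_{1}(\phi(t),\psi(t))\phi'(t)-C_{2}(\phi(t),\psi(t))\psi'(t)\big].\]
On $S_{1}$ the argument $C(\phi(t),\psi(t))$ exceeds $\phi(0)$, hence $(\phi^{[-1]})'>0$; on $S_{2}$ the bracket is strictly positive. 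Thus $J(t)>0$ on $S=S_{1}\cap S_{2}$, which forces a genuine kink of $C_{\phi,\psi}$ along those diagonal points and therefore singular mass supported there. This establishes part (1): the singular set contains $S$.

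For part (2), with $\phi(0)=0$ so that $\phi^{[-1]}=\phi^{-1}$ globally and the off-diagonal absolute continuity above holds on all of $G^{\pm}$, I would compute the total absolutely continuous mass by integrating the density over each triangle. A direct application of the fundamental theorem of calculus on $G^{-}$ and $G^{+}$—using $C_{\phi,\psi}(u,0)=C_{\phi,\psi}(0,v)=0$ so the boundary terms on the axes vanish, and $C(x,1)=x$, $\psi(1)=1$ so the outer-edge terms equal $1$—yields $A_{C_{\phi,\psi}}(1,1)=2\int_{0}^{1}q(t)\,\mathrm{d}t$. On the other hand, differentiating the diagonal section gives $\frac{\mathrm{d}}{\mathrm{d}t}C_{\phi,\psi}(t,t)=p(t)+q(t)$, and since $C(\phi(1),\psi(1))=1$ and $C(\phi(0),\psi(0))=0$ one obtains $\int_{0}^{1}(p+q)\,\mathrm{d}t=1$. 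Combining, $\mathbb{P}(U=V)=C_{\phi,\psi}(1,1)-A_{C_{\phi,\psi}}(1,1)=1-2\int_{0}^{1}q=\int_{0}^{1}(p-q)\,\mathrm{d}t=\int_{0}^{1}J(t)\,\mathrm{d}t$, which is precisely $1-A_{C_{\phi,\psi}}(1,1)$ and proves \eqref{scp}; the identity $\frac{\mathrm{d}}{\mathrm{d}u}\big[C_{\phi,\psi}(u,u)-A_{C_{\phi,\psi}}(u,u)\big]=J(u)$ simultaneously confirms that the whole singular component sits on the diagonal.

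I expect the main obstacle to be the rigorous justification rather than the formal computation: namely (i) showing that the jump in the first-order partials genuinely produces singular (diagonal) mass and not merely a non-smoothness of an otherwise absolutely continuous density, and (ii) justifying the interchange of differentiation and integration together with the vanishing of the boundary contributions, including the delicate point that when $\phi(0)>0$ the pseudo-inverse has a flat piece which can create additional singular mass along the level curve $\{C(\phi(u\wedge v),\psi(u\vee v))=\phi(0)\}$—this is exactly why the clean identity in part (2) requires $\phi(0)=0$, whereas part (1) only asserts containment of $S$.
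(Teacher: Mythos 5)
Your computations coincide, up to relabeling, with the paper's: the paper differentiates the two branches of $C_{\phi,\psi}$ to obtain the one-sided limits of a first-order partial across the diagonal (its Eq.~\eqref{singular1}), and the resulting jump on $S_{1}$ is $\left[C_{1}(\phi(u),\psi(u))\phi'(u)-C_{2}(\phi(u),\psi(u))\psi'(u)\right]/\phi'\left(C_{\phi,\psi}(u,u)\right)$, which is exactly your $J(u)$ since $(\phi^{[-1]})'\left(C(\phi(u),\psi(u))\right)=1/\phi'\left(C_{\phi,\psi}(u,u)\right)$ on $S_{1}$. The one substantive difference is how the jump is converted into singular mass, and it is precisely the point you flagged as obstacle (i). You fix the first coordinate and differentiate in the second, so your jump is a kink of the section $v\mapsto C_{\phi,\psi}(t,v)$; a kink of a univariate distribution function does not by itself yield singular mass (an absolutely continuous law whose density has a jump produces a kinked CDF), so ``a genuine kink \ldots and therefore singular mass'' is, as written, a gap in your part (1). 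The paper closes it by working with $\partial C_{\phi,\psi}(u,v)/\partial u=\mathbb{P}(V\leq v\mid U=u)$: for fixed $u$ this is, as a function of $v$, a bona fide conditional distribution function, so the jump $p(u)-q(u)>0$ at $v=u$ is a conditional atom $\mathbb{P}(V=u\mid U=u)>0$, and Joe (1997, Theorem 1.1) then places $S$ inside the singular set. Since $C_{\phi,\psi}$ is exchangeable, your $p(t)$ and $q(t)$ are exactly the one-sided limits of $\partial_{1}C_{\phi,\psi}(t,v)$ as $v\to t^{\pm}$, so the repair is a one-line reinterpretation rather than a new idea.

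For part (2) your route is sound and in fact more explicit than the paper's. The paper only observes that when $\phi(0)=0$ the composition is smooth off the diagonal, so $C_{\phi,\psi}$ is absolutely continuous on $\{u\neq v\}$, places the singular part on the diagonal, and cites the Lebesgue decomposition for \eqref{scp}. Your triangle-by-triangle integration, giving $A_{C_{\phi,\psi}}(1,1)=2\int_{0}^{1}q(t)\,\mathrm{d}t$ and $\mathbb{P}(U=V)=\int_{0}^{1}\left(p(t)-q(t)\right)\mathrm{d}t$, is correct and quantitatively stronger: it reproduces, for instance, the paper's TF-FGM value $(5+\theta)/15$ when $\phi(t)=t$, $\psi(t)=\sqrt{t}$, where $p-q$ equals the paper's $Q(u)$. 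Your ancillary observations --- that a concave, strictly increasing $\phi$ has $\phi'>0$ on $[0,1)$, and that for $\phi(0)>0$ extra singular mass may sit on the level curve $\{C(\phi(u\wedge v),\psi(u\vee v))=\phi(0)\}$, which is why the clean decomposition in (2) requires $\phi(0)=0$ --- are both correct and consistent with the paper's statement of $S_{1}$.
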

\begin{proof} (1). From the definitions of the copula $C_{\phi,\psi}$ and the pseudo-inverse $\phi^{[-1]}$, we have that for all $(u,v)\not\in S_{1}$, $C_{\phi,\psi}(u,v)\equiv 0$.  Hence we only need to focus on the set $S_{1}$.

 Fix $(u,v)\in S_{1}$. If $u\leq v$, then $C_{\phi,\psi}(u,v)=\phi^{[-1]}(C(\phi(u),\psi(v)))$. Since $(u,v)\in S_{1}$, from Eq.(\ref{eq:pphi1}), we get
$$\phi\left(C_{\phi,\psi}(u,v)\right)=C\left(\phi(u),\psi(v)\right).$$
From the assumptions that the copula function $C$ has second order continuous derivatives and $\phi$ is differentiable, then
differentiating the above equation with respect to $u$ yields
$$\phi^{'}\left(C_{\phi,\psi}(u,v)\right)\frac{\partial C_{\phi,\psi}(u,v)}{\partial u}=\frac{\partial C(\phi(u),\psi(v))}{\partial u}
=C_{1}\left(\phi(u),\psi(v)\right)\phi^{'}(u),\ \ \ u<v.$$
Similarly, for $u>v$,
$$\phi^{'}\left(C_{\phi,\psi}(u,v)\right)\frac{\partial C_{\phi,\psi}(u,v)}{\partial u}=\frac{\partial C(\phi(v),\psi(u))}{\partial u}
=C_{2}\left(\phi(v),\psi(u)\right)\psi^{'}(u).$$
Note that $\phi'(C_{\phi,\psi}(u,v))\neq 0$ in $S_{1}$. Hence,
\begin{equation}\label{singular1}
     \frac{\partial C_{\phi,\psi}(u,v)}{\partial u}=
    \begin{cases}
         C_{1}\left(\phi(u),\psi(v)\right)\frac{\phi'(u)}{\phi'(C_{\phi,\psi}(u,v))},\quad u<v, \\
         C_{2}\left(\phi(v),\psi(u)\right)\frac{\psi'(u)}{\phi'(C_{\phi,\psi}(u,v))},\quad u>v.
    \end{cases}
\end{equation}
Notice also that $\frac{\partial C_{\phi,\psi}(u,v)}{\partial u}=\mathbb{P}(V\leq v|U=u)$. Therefore, for a fixed $u$,
$$\lim_{v\rightarrow u^{+}}\frac{\partial C_{\phi,\psi}(u,v)}{\partial u}\geq\lim_{v\rightarrow u^{-}}\frac{\partial C_{\phi,\psi}(u,v)}{\partial u},$$
and from (\ref{singular1}) we know that the equality holds only if $C_{1}(\phi(u),\psi(u))\phi'(u)=C_{2}(\phi(u),\psi(u))\psi'(u)$.

Summarizing the above facts, we know that if $S=S_{1}\cap S_{2}\neq\emptyset$, then $\frac{\partial C_{\phi,\psi}(u,v)}{\partial u}$ has jump discontinuity in $S$. Hence, in view of Theorem 1.1 of \citet{Joe(1997)}, we conclude that the singular set of the copula $C_{\phi,\psi}$ contains $S$.
\\
(2). If $\phi(0)=0$, then $\phi^{[-1]}=\phi^{-1}$. From (\ref{c1}) we get
$$
    C_{\phi,\psi}(u,v)=
    \begin{cases}
         \phi^{-1}\left(C(\phi(u),\psi(v))\right),\quad u<v, \\
         \phi^{-1}\left(C(\phi(v),\psi(u))\right),\quad u>v.
    \end{cases}
$$
From the assumptions that $(\phi,\psi)\in\mathfrak{S}(C)$ and $\phi$, $\psi$ are differentiable in [0,1], we know that $\phi^{-1}$ is differentiable and strictly increasing in [0,1]. Also, since the copula function $C$ has second order continuous derivatives, then we easily know that $C_{\phi,\psi}(u,v)$ is absolutely continuous in $\{(u,v)\in[0,1]^{2}|u\neq v\}$.

Note that $S$ is concentrated on the main diagonal. Thus, if $S\neq\emptyset$, then from the above analysis, we conclude that $C_{\phi,\psi}$ is absolutely continuous in $\{(u,v)\in[0,1]^{2}|u\neq v\}$ and its singular component is concentrated on the main diagonal.

Eq. (\ref{scp}) follows directly from Lebesgue decomposition Theorem \citep[see, e.g.,][Theorem 2.2.6]{Ash(2000)}.
\end{proof}

\begin{rem}
\label{REM1}
In Theorem \ref{thm:singular}, the condition $S\neq\emptyset$ is dependent on the base copula $C$. Note that the condition can be simplified if the base copula $C$ is exchangeable.
Suppose that $C$ is exchangeable and $C_{1}(u,v)=g(u,v)h(u)$, where $g(u,v)$ is also exchangeable and satisfies that $g(u,v)>0$ in $[0,1]^{2}$. Then when   $\int_{\psi(u)}^{\phi(u)}h(t)dt$ is strictly increasing in [0,1], we can prove that $S\neq\emptyset$ and $C_{\phi,\psi}$ contains a singular component along the main diagonal. The proof will be given in Appendix.

As an example, choosing an Archimedean copula with
additive generator $\varphi$ as the base copula, then $h(u)=-\varphi^{'}(u)$ and if the function $\varphi\circ\psi-\varphi\circ\phi$ is strictly increasing in $[0,1]$, the copula $C_{\phi,\psi}$ contains a singular component along the main diagonal. Furthermore, letting the base copula being the product copula and $\varphi(t)=-\ln t$,  the condition can be simplified as that
$\phi/\psi$ is strictly increasing in $[0,1]$.
\end{rem}

\subsection{Some classes of TF copulas}

Besides including many important families of bivariate copulas, the TF copula of type (\ref{c1}) is of importance since such transformation allows us to construct new families of copulas with singular components along the main diagonal.

In the following, we consider TF copulas by choosing the FGM copula, the Cuadras-Aug\'{e} copula and the Archimedean copula as the base copula respectively.
It is well-known that the FGM copula is absolutely continuous and it has a polynomial form, thus we choose the FGM copula as the first example. For comparison purpose, in the second example we choose the Cuadras-Aug\'{e} copula as the base copula, because it has a singular component along the main diagonal. Since the Archimedean copula is an important family of copulas in many areas, we take the Archimedean copula as the third example.

\begin{example}
\label{EX: FGM} (TF-FGM copula)
The FGM copula of type \eqref{FG}  is supermigrative when $\theta\in[0,1]$. If $(\phi,\psi)\in\Phi\times\Psi$ and $\phi$ is a concave function such that $\phi/\psi$ is increasing in [0,1], then from Theorem \ref{thm:Distorted copula}, we know that
\begin{equation}\label{FGM}C_{\phi,\psi}(u,v)=\phi^{[-1]}(\phi(u\wedge v)\psi(u\vee v)(1+\theta(1-\phi(u\wedge v))(1-\psi(u\vee v)))),\ \ \theta\in[0,1]
\end{equation}
is a bivariate copula.

Let $\phi(t)=t^{\beta}$ and $\psi(t)=t^{\gamma}$, where $0\leq\gamma\leq\beta\leq1$. In this case, $(\phi,\psi)\in\Phi\times\Psi$, $\phi$ is a concave function and $\phi/\psi$ is increasing in [0,1]. Then \eqref{FGM}
can be written as
\begin{equation}\label{FGMPower}C_{\phi,\psi}(u,v)=(u\wedge v)(u\vee v)^{\frac{\gamma}{\beta}}\left(1+\theta(1-(u\wedge v)^{\beta})(1-(u\vee v)^{\gamma})\right)^{\frac{1}{\beta}}, \ \ \theta\in[0,1]. \end{equation}
Setting $\beta=1$, $\gamma=1/2$ in \eqref{FGMPower}, then
$$S=\{(u,u)|C_{1}(\phi(u),\psi(u))\phi'(u)>C_{2}(\phi(u),\psi(u))\psi'(u),\ u\in[0,1]\}$$
$$=\{(u,u)|\sqrt{u}/2+\theta(\sqrt{u}/2-3/2u^{3/2}+u^{2})>0,\ u\in[0,1]\}. \ \ \ \ \ \ \ \ $$
Define $Q(u)=\sqrt{u}/2+\theta(\sqrt{u}/2-3/2u^{3/2}+u^{2})$, $u\in[0,1]$.
Since $Q^{'}(u)=1/(4\sqrt{u})+\theta(1+8u^{3/2}-9u)/(4\sqrt{u})>1/(4\sqrt{u})+\theta(1-u^{3/2})/(4\sqrt{u})>0$ for all $u\in(0,1]$ and $\theta\in[0,1]$, then $Q(u)$ is an increasing function in [0,1]. Furthermore, it is easy to see that $Q(0)=0$ and $Q(1)=1/2$. Thus we can get that $S=\{(u,u)| u\in(0,1]\}\neq \emptyset$ for all $\theta\in[0,1]$. From Theorem \ref{thm:singular}, we know that the TF-FGM copula contains a singular component along the main diagonal. Also, from the double integral in (\ref{dscp}) for the TF-FGM copula, we get
$$A_{C_{\phi,\psi}}(u,v)=u\sqrt{v}\left(1+\theta(1-u)(1-\sqrt{v})\right)-\frac{u^{3/2}(5+\theta(5-9u+5u^{3/2}))}{15}, \ u<v,$$
and
$$A_{C_{\phi,\psi}}(u,v)=v\sqrt{u}\left(1+\theta(1-v)(1-\sqrt{u})\right)-\frac{v^{3/2}(5+\theta(5-9v+5v^{3/2}))}{15}, \ u>v.$$
Hence, we have
$$A_{C_{\phi,\psi}}(u,v)=C_{\phi,\psi}(u,v)-\frac{(u\wedge v)^{3/2}(5+\theta(5-9(u\wedge v)+5(u\wedge v)^{3/2}))}{15},\ \ \theta\in[0,1],$$
and consequently
$$S_{C_{\phi,\psi}}(u,v)=\frac{(u\wedge v)^{3/2}(5+\theta(5-9(u\wedge v)+5(u\wedge v)^{3/2}))}{15},\ \ \theta\in[0,1].$$
Furthermore, $$\mathbb{P}(U=V)=S_{C_{\phi,\psi}}(1,1)=\frac{5+\theta}{15},\ \ \theta\in[0,1],$$
where $U$ and $V$ are the uniform [0,1] random variables following the joint distribution function (\ref{FGMPower}) with $\beta=1$ and $\gamma=1/2$.
\end{example}
\

 In the following example,  we choose the Cuadras-Aug\'{e} copula as the base copula. It is known that the Cuadras-Aug\'{e} copula also has a singular component along the main diagonal.
\begin{example}
\label{EX: CA} (TF-Cuadras-Aug\'{e} copula) Recall that the Cuadras-Aug\'{e} copula of type \eqref{CACOPULA}
   is supermigrative. Let $\phi(t)=t^{\beta}$ and $\psi(t)=t^{\beta}(2-t^{\gamma})$, $t\in[0,1]$, where $0\leq\gamma\leq\beta\leq1$. In this case, $(\phi,\psi)\in\Phi\times\Psi$, $\phi$ is a concave function and $\phi/\psi$ is increasing in [0,1]. Thus
\begin{equation}\label{capower}C_{\phi,\psi}(u,v)=(u\wedge v)(u\vee v)^{\alpha}(2-(u\vee v)^{\gamma})^{\frac{\alpha}{\beta}},\ \ \ 0\leq\alpha\leq 1 \end{equation}
is a copula function.
Setting $\beta=\alpha$ in (\ref{capower}), we have
$$A_{C_{\phi,\psi}}(u,v)=C_{\phi,\psi}(u,v)-(u\wedge v)^{1+\alpha}\left(\frac{4}{1+\alpha}-\frac{(u\wedge v)^{\gamma}(1-\alpha-\gamma)}{1+\alpha+\gamma}-2\right),\ \ 0\leq\alpha\leq 1,\ \ 0\leq\gamma\leq\alpha,$$
and consequently
$$S_{C_{\phi,\psi}}(u,v)=(u\wedge v)^{1+\alpha}\left(\frac{4}{1+\alpha}-\frac{(u\wedge v)^{\gamma}(1-\alpha-\gamma)}{1+\alpha+\gamma}-2\right),\ \ 0\leq\alpha\leq 1,\ \ 0\leq\gamma\leq\alpha.$$
For $(U_{1},V_{1})$ having the copula (\ref{capower}) with $\beta=\alpha$ and $(U_{2},V_{2})$ having the Cuadras-Aug\'{e} copula $C$, we have
$$\mathbb{P}(U_{1}=V_{1})=\frac{4}{1+\alpha}-\frac{1-\alpha-\gamma}{1+\alpha+\gamma}-2,\ \ 0\leq\alpha\leq 1,\ \ 0\leq\gamma\leq\alpha$$
and
$$\mathbb{P}(U_{2}=V_{2})=\frac{1-\alpha}{1+\alpha},\ \ 0\leq\alpha\leq 1.$$
Then
$$\mathbb{P}(U_{1}=V_{1})-\mathbb{P}(U_{2}=V_{2})=\frac{2\gamma}{(1+\alpha)(1+\alpha+\gamma)}\geq 0.$$
Hence, the value of $P(U_1=V_1)$ for the copula of type (\ref{capower}) with $\beta=\alpha$ is larger than $P(U_2=V_2)$ for the Cuadras-Aug\'{e} copula. It shows that the transformation (\ref{c1}) enlarges the singular component along the main diagonal of the Cuadras-Aug\'{e} copula.
\end{example}
\

In the next example, we choose the Archimedean copula as the base copula.
\begin{example}
\label{EX: AC} (TF-Archimedean copula) Let the base copula $C$ be an Archimedean copula of type \eqref{eq:dis} with  generator $\varphi$ and $(\phi,\psi)\in\Phi\times\Psi$.
Then
$$C_{\phi,\psi}(u,v)=\phi^{[-1]}\circ\varphi^{[-1]}(\varphi\circ\phi(u\wedge v)+\varphi\circ\psi(u\vee v)).$$
From Lemma 3.7 in \cite{Morillas(2005)}, we can know that $(\varphi\circ\phi)^{[-1]}=\phi^{[-1]}\circ\varphi^{[-1]}$. Then $C_{\phi,\psi}(u,v)$ can be simplified as
\begin{equation}\label{ex1}C_{\phi,\psi}(u,v)=(\varphi\circ\phi)^{[-1]}((\varphi\circ\phi)(u\wedge v)+(\varphi\circ\psi)(u\vee v)).\end{equation}
From Theorem \ref{thm:Distorted copula}, we know that when $\phi$ is concave and the function $\varphi\circ\phi-\varphi\circ\psi$ is decreasing in [0,1],
then the function of type (\ref{ex1}) is a copula.

In particular, let generator $\varphi(t)=1-t$, $t\in[0.1]$. In this case, the base copula $C$ is the Fr\'{e}chet lower
bound $W$. We choose the concave function $\phi\in\Phi$ and the function $\psi\in\Psi$ such that $\psi-\phi$ is decreasing in [0,1]. Then the assumption (\ref{d1}) holds and Theorem \ref{thm:Distorted copula} ensures that
$$W_{\phi,\psi}(u,v)=\phi^{[-1]}(\max\{\phi(u\wedge v)+\psi(u\vee v)-1,0\})$$
is a copula function.
\end{example}

\subsection{Numerical illustration}

For understanding the singular component of the TF
copula $C_{\phi,\psi}$, in the following we will give the scatter plots for some
TF-Archimedean copulas.

Letting the base copula $C$ be an Archimedean copula with generator $\varphi$, then the TF-Archimedean copula has simple form (\ref{ex1}). The Clayton copula, the Gumbel copula and the Frank copula are three important classes of copulas in the Archimedean family and their generators can be expressed as
$\varphi(t)=t^{-\alpha}-1$, $\varphi(t)=(-\ln t)^{\beta}$ and $\varphi(t)=-\ln\left(\frac{e^{-\gamma t}-1}{e^{-\gamma}-1}\right)$ respectively.
We choose three base copulas, which are the Clayton copula with parameter $2$, the Gumbel copula with parameter $3$ and the Frank copula with parameter $4$ for numerical illustrations.

In order to get a comparison result, we choose four pairs of functions $(\phi, \psi)$: (a) $(\phi(t), \psi(t))=(t^{\frac{4}{5}}, t^{\frac{1}{2}})$; (b) $(\phi(t), \psi(t))=(t^{\frac{2}{3}}, t^{\frac{1}{2}})$; (c) $(\phi(t), \psi(t))=(t^{\frac{1}{2}}, t^{\frac{1}{2}})$; (d) $(\phi(t), \psi(t))=(t, t)$. It is easy to verify that for every Archimedean generator $\varphi$ mentioned above,  $\phi$ is concave and $(\varphi\circ\phi-\varphi\circ\psi)$ is decreasing in all the four cases described above. Then the assumption (\ref{d1}) holds such that $C_{\phi,\psi}$ is a bivariate copula in each case. Note that for every TF-Archimedean copula described above, $\phi\neq\psi$ and $S\neq \emptyset$ in case (a) and case (b). Then, all the three classes of TF-Archimedean copulas contain a singular component along the main diagonal in
case (a) and case (b). Since $\phi=\psi$ in case (c), we know that the TF copula coincides with the copula $C_{\phi}$ of type (\ref{eq:dis+}) in this case. In case (d), $C_{\phi,\psi}=C$. Since the Archimedean copula $C$ is exchangeable, then $C_{1}(u,u)=C_{2}(u,u)$ such that $S=\emptyset$ in case (c) and case (d). Thus, all the three classes of TF-Archimedean copulas are absolutely continuous and should not contain a singular component along the main diagonal in both case (c) and case (d).

We generate 10,000 samples of the TF-Calyton copula, the TF-Gumbel copula and the TF-Frank copula and illustrate them in Figures \ref{fig:1}-\ref{fig:3}, respectively. We also calculate the values of Kendall's $\tau$ as well as Spearman's $\rho$ and the results are shown below.

($I$). The base copula is a Clayton copula with parameter $\alpha=2$: (a) $\tau$=0.6226, $\rho$=0.7712; (b) $\tau$=0.5220, $\rho$=0.6848; (c) $\tau$=0.3313, $\rho$=0.4756; (d) $\tau$=0.5023, $\rho$=0.6839.

($II$). The base copula is a Gumbel copula with parameter $\beta=3$: (a) $\tau$=0.8734, $\rho$=0.9608; (b) $\tau$=0.8017, $\rho$=0.9273; (c) $\tau$=0.6677, $\rho$=0.8499; (d) $\tau$=0.6575, $\rho$=0.8399.

($III$). The base copula is a Frank copula with parameter $\gamma=4$: (a) $\tau$=0.5768, $\rho$=0.7359; (b) $\tau$=0.4828, $\rho$=0.6469; (c) $\tau$=0.3297, $\rho$=0.4787; (d) $\tau$=0.3859, $\rho$=0.5548.

  \begin{figure}[htbp]
\includegraphics[scale=0.526]{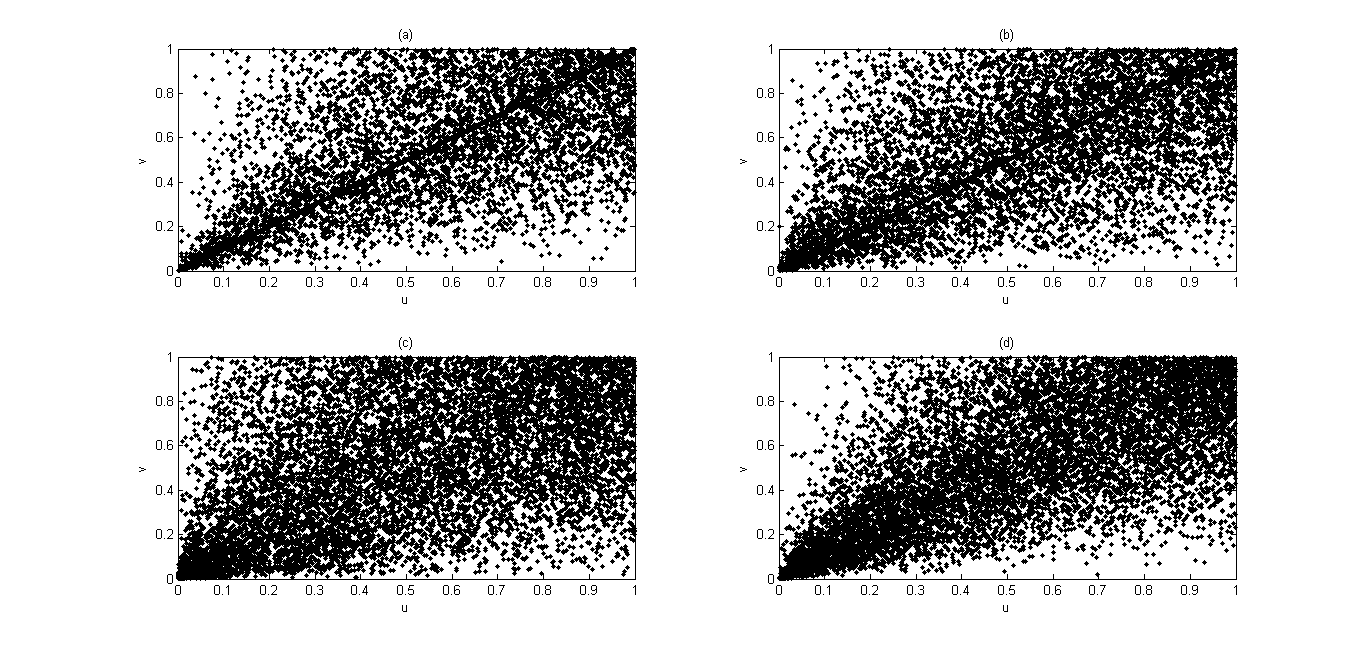}
\caption{The scatter plots for the TF-Calyton copula with parameter $\alpha=2$ for different pairs of functions: (a) $(\phi(t), \psi(t))=(t^{\frac{4}{5}}, t^{\frac{1}{2}})$; (b) $(\phi(t), \psi(t))=(t^{\frac{2}{3}}, t^{\frac{1}{2}})$; (c) $(\phi(t), \psi(t))=(t^{\frac{1}{2}}, t^{\frac{1}{2}})$; (d) $(\phi(t), \psi(t))=(t, t)$.
\label{fig:1}}
\end{figure}
\begin{figure}[htbp]
\includegraphics[scale=0.526]{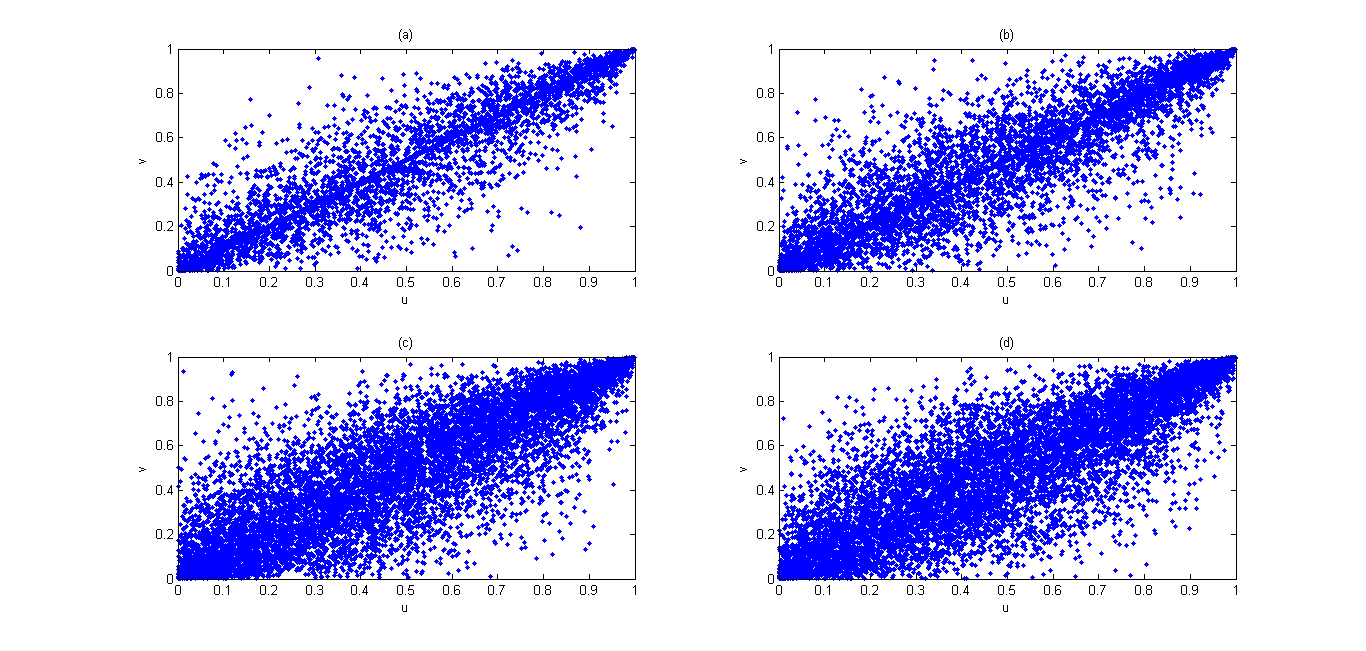}
\caption{The scatter plots for the TF-Gumbel copula with parameter $\beta=3$ for different pairs of functions: (a) $(\phi(t), \psi(t))=(t^{\frac{4}{5}}, t^{\frac{1}{2}})$; (b) $(\phi(t), \psi(t))=(t^{\frac{2}{3}}, t^{\frac{1}{2}})$; (c) $(\phi(t), \psi(t))=(t^{\frac{1}{2}}, t^{\frac{1}{2}})$; (d) $(\phi(t), \psi(t))=(t, t)$.
\label{fig:2}}
\end{figure}
\begin{figure}[htbp]
\includegraphics[scale=0.526]{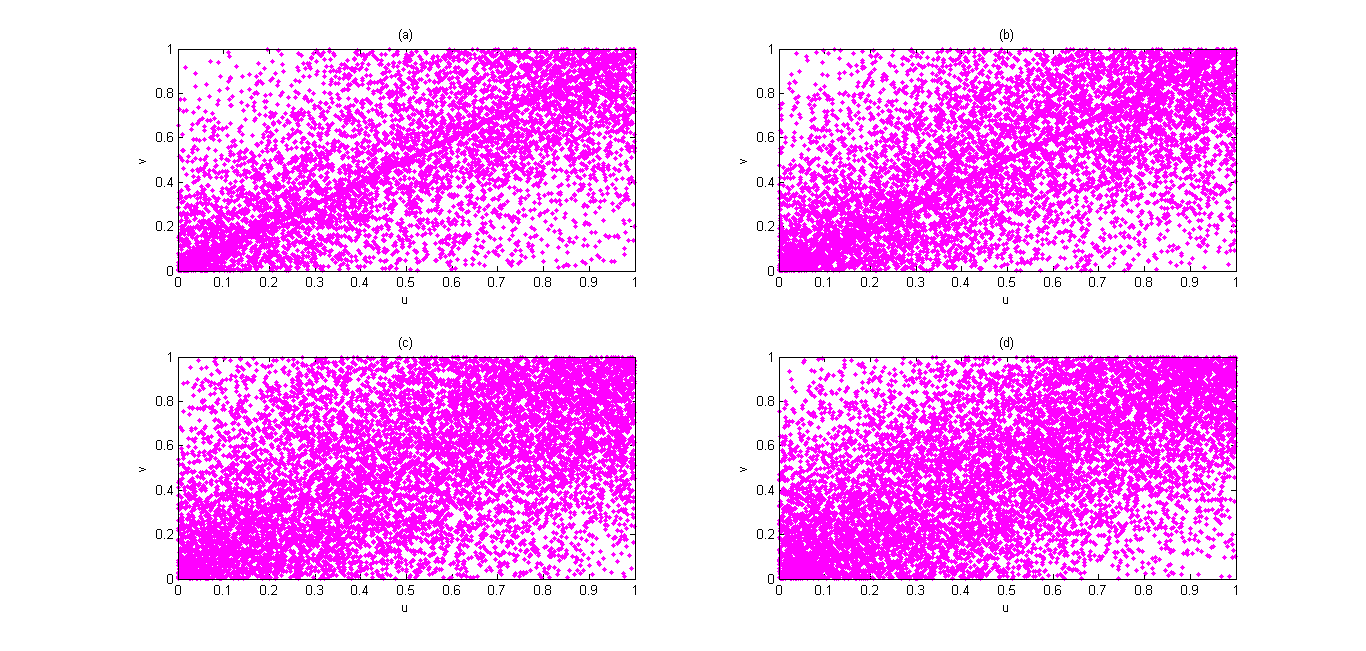}
\caption{The scatter plots for the TF-Frank copula with parameter $\gamma=4$ for different pairs of functions: (a) $(\phi(t), \psi(t))=(t^{\frac{4}{5}}, t^{\frac{1}{2}})$; (b) $(\phi(t), \psi(t))=(t^{\frac{2}{3}}, t^{\frac{1}{2}})$; (c) $(\phi(t), \psi(t))=(t^{\frac{1}{2}}, t^{\frac{1}{2}})$; (d) $(\phi(t), \psi(t))=(t, t)$.
\label{fig:3}}
\end{figure}

From the scatter plots presented in Figures \ref{fig:1}-\ref{fig:3}, we find that there are straight lines in both case (a) and case (b) of all the three pictures, which are the singular components along the main diagonal of the corresponding copulas. It can also be seen that
all cases (c) and (d) of the three base copulas do not contain a singular component. By Theorem \ref{thm:singular}, these results are conceivable since $S$ equals $\{(u,u)|0\le u\le 1\}$ for (a),(b) and $\emptyset$ for (c),(d). For every TF-Archimedean copula described above,
it is also observable that the values of Kendall's $\tau$ as well as Spearman's $\rho$ in case (a) and case (b) are greater than those in (c) and (d) when the base copula is the same. Thus the rank correlations are enhanced when the copula is transformed by (\ref{c1}).

\section{Tail dependence coefficient of the TF copula}

Copulas with different tail dependence are usually required for modelling the extreme events \citep{McNeil et al.(2005),Salvadori et al.(2007)}. Tail dependence coefficient was proposed for studying the tail dependence of copula functions \citep{Nelsen(2006),McNeil et al.(2005)}.
For a copula function $C$, its upper and lower tail dependence coefficients $\lambda_{U}(C)$ and $\lambda_{L}(C)$, respectively, are defined by
\begin{equation}\label{cam1}\lambda_{U}(C)=2-\lim_{u\rightarrow1^{-}}\frac{1-C(u,u)}{1-u},
\end{equation}
\begin{equation}\label{cam2}\lambda_{L}(C)=\lim_{u\rightarrow0^{+}}\frac{C(u,u)}{u},\ \ \ \ \ \ \ \ \ \ \ \ \end{equation}
 given that the above limits exist. The upper and lower tail dependence coefficients $\lambda_{U}(C)$ and $\lambda_{L}(C)$ have been shown to be of importance in the study of the tail dependence \citep{Nelsen(2006)}. We say that $C$ has upper tail dependence if
$0<\lambda_{U}(C)\leq1$, otherwise we say that $C$ has no upper tail dependence; similarly for $\lambda_{U}(L)$.

In the following proposition,  we give the tail dependence relationship between the base copula $C$ and its TF copula $C_{\phi,\psi}$.
\begin{prop}
\label{prop:utail} Let $C$ be a bivariate copula, $(\phi,\psi)\in\mathfrak{S}(C)$ and $\phi$ be a concave function.

\noindent (a). Suppose that $\lambda_{U}(C)$ exists and it is finite, and
for some $\alpha>0$,
$$\lim_{t\rightarrow1^{-}}\frac{1-\phi(t)}{(1-t)^{\alpha}}=a\in(0,+\infty),\ \ \lim_{t\rightarrow1^{-}}\frac{1-\psi(t)}{1-\phi(t)}=b.$$
\noindent (1). If $b=1$, then
\begin{equation}\label{up1}\lambda_{U}(C_{\phi,\psi})=2-\left(2-\lambda_{U}(C)\right)^{\frac{1}{\alpha}}.\end{equation}
\noindent (2). If $\lambda_{U}(C)=0$ and $ b\in[0,1]$, then
\begin{equation}\label{up2}\lambda_{U}(C_{\phi,\psi})=2-\left(1+b\right)^{\frac{1}{\alpha}}.
\end{equation}
(b). Suppose that $\lambda_{L}(C)$ exists and it is finite, and for some $\alpha>0$,
$$\lim_{t\rightarrow0^{+}}\frac{\phi(t)}{t^{\alpha}}=a\in(0,+\infty),\ \ \lim_{t\rightarrow0^{+}}\frac{\phi(t)}{\psi(t)}=b.$$
(1). If $b=1$, then
\begin{equation}\label{lo1}\lambda_{L}(C_{\phi,\psi})=\left(\lambda_{L}(C)\right)^{\frac{1}{\alpha}}.\end{equation}
(2). If $\lambda_{L}(C)=0$ and $b\in(0,1]$, then $\lambda_{L}(C_{\phi,\psi})=0$.
\end{prop}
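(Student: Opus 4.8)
The plan is to compute the tail dependence coefficients directly from their definitions \eqref{cam1} and \eqref{cam2} by substituting the diagonal section $C_{\phi,\psi}(u,u)$ and performing an asymptotic analysis as $u\to 1^-$ (upper) or $u\to 0^+$ (lower). The key simplification is that on the diagonal $u\wedge v = u\vee v = u$, so by \eqref{c1} we have $C_{\phi,\psi}(u,u)=\phi^{[-1]}(C(\phi(u),\psi(u)))$, which reduces everything to a one-variable limit problem. I would handle the upper tail (part (a)) and lower tail (part (b)) separately, and within each handle the two sub-cases by the same method.

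For the upper tail, I would first set $w=C_{\phi,\psi}(u,u)$, so that $\phi(w)=C(\phi(u),\psi(u))$ on the relevant region (using \eqref{eq:pphi1} and the fact that near the corner the argument exceeds $\phi(0)$). The quantity I need is
\[
2-\lambda_{U}(C_{\phi,\psi})=\lim_{u\to1^-}\frac{1-w}{1-u}.
\]
The idea is to write $\frac{1-w}{1-u}=\frac{1-w}{1-\phi(w)}\cdot\frac{1-\phi(w)}{1-u}$ and to exploit the given regular-variation hypothesis $1-\phi(t)\sim a(1-t)^{\alpha}$. That hypothesis gives $\frac{1-\phi(w)}{(1-w)^{\alpha}}\to a$ and $\frac{1-\phi(u)}{(1-u)^{\alpha}}\to a$, so after combining I expect $1-w\sim\big(\tfrac{1-\phi(w)}{a}\big)^{1/\alpha}$. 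Then $1-\phi(w)=1-C(\phi(u),\psi(u))$ must be related back to $1-u$ through the base copula's upper tail behavior. Writing $1-C(x,y)$ for the copula near $(1,1)$ and using $\psi(t)=1-(1-\psi(t))$ together with the hypothesis $\frac{1-\psi(t)}{1-\phi(t)}\to b$, I would estimate $1-C(\phi(u),\psi(u))$ in terms of $1-\phi(u)$ and $1-\psi(u)$. For $b=1$ the two arguments approach $1$ at the same rate, and the definition of $\lambda_U(C)$ applied along the (possibly non-diagonal but asymptotically diagonal) path yields $1-C(\phi(u),\psi(u))\sim(2-\lambda_U(C))(1-\phi(u))$, producing \eqref{up1}. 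For $b\in[0,1]$ with $\lambda_U(C)=0$, the Fréchet bound $C(x,y)\ge x+y-1$ combined with $C(x,y)\le\min(x,y)$ and the no-upper-tail-dependence condition should force $1-C(\phi(u),\psi(u))\sim(1-\phi(u))+(1-\psi(u))\sim(1+b)(1-\phi(u))$, giving \eqref{up2}. The lower tail (part (b)) is analogous but more direct: $C_{\phi,\psi}(u,u)=\phi^{-1}(C(\phi(u),\psi(u)))$ near $0$ (once $\phi(0)=0$ is invoked via the hypothesis forcing $a$ finite and $\phi(t)\sim at^{\alpha}$), and dividing by $u$ and using $\phi(t)\sim a t^\alpha$ together with $\lambda_L(C)=\lim C(x,x)/x$ yields \eqref{lo1}; the case $\lambda_L(C)=0$, $b\in(0,1]$ follows since $C(\phi(u),\psi(u))\le C(\psi(u),\psi(u))$ and the ratio vanishes.

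The main obstacle I anticipate is justifying the replacement of the off-diagonal copula value $C(\phi(u),\psi(u))$ (where the two arguments $\phi(u)$ and $\psi(u)$ are generally unequal) by diagonal tail behavior. The tail dependence coefficient $\lambda_U(C)$ is defined only along the diagonal $C(u,u)$, yet here I must control $C$ along the curve $u\mapsto(\phi(u),\psi(u))$, whose two coordinates tend to $1$ at comparable but not identical rates governed by $b$. Making this rigorous requires either a uniformity argument for the convergence defining $\lambda_U$, or careful two-sided bounding via the Fréchet--Hoeffding bounds $W\le C\le M$ (which the excerpt provides) combined with Lipschitz-type estimates $|C(x_1,y)-C(x_2,y)|\le|x_1-x_2|$ valid for every copula. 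I expect the $b=1$ case to be the delicate one, since there one genuinely needs the diagonal-limit definition of $\lambda_U(C)$ to transfer to the asymptotically-diagonal path, and I would spend most of the effort there; the remaining cases reduce cleanly to the universal copula bounds.
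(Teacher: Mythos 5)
Your proposal is correct and follows essentially the same route as the paper's own proof: both reduce everything to the diagonal value $\phi^{[-1]}(C(\phi(u),\psi(u)))$, invert the regular-variation hypothesis on $\phi$ to handle the outer $\phi^{[-1]}$, and pin down the asymptotics of $1-C(\phi(u),\psi(u))$ (resp.\ $C(\phi(u),\psi(u))/\phi(u)$) relative to $1-\phi(u)$ via the sandwich $C(\phi(u),\phi(u))\leq C(\phi(u),\psi(u))\leq C(\psi(u),\psi(u))$, which is legitimate because $(\phi,\psi)\in\mathfrak{S}(C)$ forces $\phi\leq\psi$, together with the nonnegative function $f(x,y)=1+C(x,y)-x-y$ and $f(\phi(u),\psi(u))\leq f(\phi(u),\phi(u))=o(1-\phi(u))$ in the $\lambda_{U}(C)=0$ case. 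The delicate off-diagonal transfer you flag in the $b=1$ case is resolved in the paper exactly by the two-sided bounding you propose (your Lipschitz alternative $|C(x_{1},y)-C(x_{2},y)|\leq|x_{1}-x_{2}|$ works equally well), and the only detail you gloss over, namely the degenerate lower-tail situation where $C$ vanishes near the origin, which the paper treats as a separate preliminary case, is easily absorbed into your $\limsup$ bounds.
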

\begin{proof}
(a).
Since $\phi$ is a continuous and strictly increasing function satisfying $\phi(1)=1$, then $C(\phi(u),\phi(u))<1, C(\phi(u),\psi(u))<1$ for $u\in [0,1)$. Thus, we have
 \begin{eqnarray}\label{1}
&\lambda_{U}(C_{\phi,\psi})&= 2-\lim_{u\rightarrow1^{-}}\frac{1-\phi^{[-1]}(C(\phi(u),\psi(u)))}{1-u}\nonumber\\
&&= 2-\lim_{u\rightarrow1^{-}}\left\{\left(\frac{1-\phi^{[-1]}(C(\phi(u),\psi(u)))}{(1-C(\phi(u),\psi(u)))^{\frac{1}{\alpha}}}
\cdot
\frac{(1-C(\phi(u),\phi(u)))^{\frac{1}{\alpha}}}{(1-\phi(u))^{\frac{1}{\alpha}}}\cdot\frac{(1-\phi(u))^{\frac{1}{\alpha}}}{1-u}\right)
\right. \nonumber\\
&&\ \ \ \ \ \ \ \ \ \ \ \  \ \ \ \ \ \ \ \ \ \ \left.\times
\left(\frac{1-C(\phi(u),\psi(u))}{1-C(\phi(u),\phi(u))}\right)^{\frac{1}{\alpha}}\right\},
\end{eqnarray}
 given that the above limit exists.
 In order to derive $\lambda_{U}(C_{\phi,\psi})$, we need to calculate $\beta:=\lim_{u\rightarrow1^{-}}[1-C(\phi(u),\psi(u))]/[1-C(\phi(u),\phi(u))]$ and
 $$\tilde{\beta}:=\lim_{u\rightarrow1^{-}}\left(\frac{1-\phi^{[-1]}(C(\phi(u),\psi(u)))}{(1-C(\phi(u),\psi(u)))^{\frac{1}{\alpha}}}
\cdot
\frac{(1-C(\phi(u),\phi(u)))^{\frac{1}{\alpha}}}{(1-\phi(u))^{\frac{1}{\alpha}}}\cdot\frac{(1-\phi(u))^{\frac{1}{\alpha}}}{1-u}\right)$$
 respectively if the limits exist.

 We first calculate the limit $\tilde{\beta}$. From (\ref{cam1}) and the assumptions that $\lambda_{U}(C)$ exists and it is finite, we know that \begin{equation}\label{up1+}\lim_{u\rightarrow1^{-}}\frac{1-C(u,u)}{1-u}=2-\lambda_{U}(C),\end{equation}
thus
\begin{equation}
\lim_{u\rightarrow1^{-}}\frac{(1-C(\phi(u),\phi(u)))^{\frac{1}{\alpha}}}{(1-\phi(u))^{\frac{1}{\alpha}}}
=(2-\lambda_{U}(C))^{\frac{1}{\alpha}}
\label{beta1}
\end{equation}
follows. On the other hand, from the assumption that $\lim_{u\rightarrow1^{-}}(1-\phi(u))/(1-u)^{\alpha}=a$, we know $\lim_{u\rightarrow1^{-}}$
$(1-u)/(1-\phi^{[-1]}(u))^{\alpha}$ $=a$. Since the functions $\phi$ and $\psi$ are continuous in [0,1] with $\phi(1)=\psi(1)=1$, we have $C(\phi(1), \psi(1))=1$. Thus we get
\begin{equation}
\lim_{u\rightarrow1^{-}}\left(\frac{1-\phi^{[-1]}(C(\phi(u),\psi(u)))}{(1-C(\phi(u),\psi(u)))^{\frac{1}{\alpha}}}
\cdot\frac{(1-\phi(u))^{\frac{1}{\alpha}}}{1-u}\right)=1.\label{beta2}
\end{equation}
Combining \eqref{beta1} and \eqref{beta2} we conclude that
\begin{eqnarray}
\label{up+}
\tilde{\beta}=(2-\lambda_{U}(C))^{\frac{1}{\alpha}}.
\end{eqnarray}

In the following, we will calculate the limit $\beta$
under two different conditions.
\\ \\
(1). We first consider the case $b=1$, i.e., $\lim_{u\rightarrow1^{-}}(1-\psi(u))/(1-\phi(u))=1$. In this case, we will apply Sandwich Theorem to calculate the limit $\beta$. Since $C$ is increasing in every variable and $\phi(u)\leq\psi(u)$ for all $u\in[0,1]$, we get
\begin{equation}\label{TFUinequality}C(\phi(u),\phi(u))\leq C(\phi(u),\psi(u))\leq C(\psi(u),\psi(u)),\end{equation}
such that
\begin{equation}\label{TFU1++}\frac{1-C(\psi(u),\psi(u))}{1-C(\phi(u),\phi(u))}\leq\frac{1-C(\phi(u),\psi(u))}{1-C(\phi(u),\phi(u))}\leq 1.\end{equation}
Now, we calculate the limit of the left side in the inequality (\ref{TFU1++}). Since $\psi(1)=\phi(1)=1$,
from (\ref{up1+}) we have
$$\lim_{u\rightarrow1^{-}}\left(\frac{1-C(\psi(u),\psi(u))}{1-\psi(u)}\cdot\frac{1-\phi(u)}{1-C(\phi(u),\phi(u))}\right)=1.$$
Based on the above result and the assumption that $\lim_{u\rightarrow1^{-}}(1-\psi(u))/(1-\phi(u))=1$, we derive
$$\lim_{u\rightarrow1^{-}}\frac{1-C(\psi(u),\psi(u))}{1-C(\phi(u),\phi(u))}
=\lim_{u\rightarrow1^{-}}\left\{\left(\frac{1-C(\psi(u),\psi(u))}{1-\psi(u)}\cdot\frac{1-\phi(u)}{1-C(\phi(u),\phi(u))}\right)\cdot\frac{1-\psi(u)}{1-\phi(u)}\right\}=1.$$
Then, letting $u\rightarrow1^{-}$ and applying Sandwich Theorem to the inequality (\ref{TFU1++}), we get $\beta=1$. Substituting this result and (\ref{up+}) into (\ref{1}), we can obtain (\ref{up1}).
\\ \\
(2). Now we consider the case $\lambda_{U}(C)=0$, $b\in[0,1]$. For a copula function $C$, we define \begin{equation}\label{TFU1}f(u_{1},u_{2})=1+C(u_{1},u_{2})-u_{1}-u_{2},\ (u_{1},u_{2})\in[0,1]^{2}.\end{equation}
Note that
\begin{equation}\label{EQTFU1}f(u_{1},u_{2})=C(1,1)+C(u_{1},u_{2})-C(u_{1},1)-C(1,u_{2})\geq 0.\end{equation}
Combining (\ref{up1+}) and (\ref{TFU1}), we have
$$2-\lambda_{U}(C)=2-\lim_{u\rightarrow1^{-}}
\frac{f(u,u)}{1-u}.$$
Thus
\begin{equation}\label{TFU2}\lim_{u\rightarrow1^{-}}
\frac{f(u,u)}{1-u}=\lambda_{U}(C)\end{equation}
follows.

Next we calculate the limit $\beta$. Note that
\begin{eqnarray}\label{TFU3}
&\beta&=\lim_{u\rightarrow1^{-}}\frac{1-C(\phi(u),\psi(u))}{1-C(\phi(u),\phi(u))}\nonumber\\
&&=\lim_{u\rightarrow1^{-}}\frac{1-\phi(u)+1-\psi(u)
-f(\phi(u),\psi(u))}
{1-\phi(u)+1-\phi(u)
-f(\phi(u),\phi(u))}\nonumber\\
&&=\lim_{u\rightarrow1^{-}}\frac{1+\frac{1-\psi(u)}{1-\phi(u)}
-\frac{f(\phi(u),\psi(u))}{1-\phi(u)}}
{2
-\frac{f(\phi(u),\phi(u))}{1-\phi(u)}},
\end{eqnarray}
where the second equality follows from the definition of $f(u_{1},u_{2})$ given in (\ref{TFU1}). Then, in order to get  the limit (\ref{TFU3}), we need to calculate $\lim_{u\rightarrow1^{-}}(1-\psi(u))/(1-\phi(u))$, $\lim_{u\rightarrow1^{-}}[f(\phi(u),\phi(u))/(1-\phi(u))]$ and $\lim_{u\rightarrow1^{-}}[f(\phi(u),\psi(u))/(1-\phi(u))]$ respectively if the limits exist.

From the assumptions, we know that $\lim_{u\rightarrow1^{-}}(1-\psi(u))/(1-\phi(u))=b$. Since $\phi(1)=1$ and $\lambda_{U}(C)=0$, from (\ref{TFU2}) we get
\begin{equation}\label{TFU1+}\lim_{u\rightarrow1^{-}}\frac{f(\phi(u),\phi(u))}{1-\phi(u)}=\lambda_{U}(C)=0.\end{equation}
In the next, we apply Sandwich Theorem to calculate the limit $\lim_{u\rightarrow1^{-}}[f(\phi(u),\psi(u))/(1-\phi(u))]$.
From the assumption $(\phi,\psi)\in\mathfrak{S}(C)$, the inequality (\ref{d1}) holds. Then, from Remark \ref{re:inequality}, we know that $\phi(u)\leq\psi(u)$ for all $u\in[0,1]$. Thus, from (\ref{EQTFU1}) it is easy to verify that
$$f(\phi(u),\psi(u))\leq f(\phi(u),\phi(u)).
$$
Also, notice that $f(u_{1},u_{2})\geq 0$ for all $(u_{1},u_{2})\in[0,1]^{2}$. Then, for $u\in[0, 1]$ we have
\begin{equation}\label{TFU1+++}0\leq\frac{f(\phi(u),\psi(u))}{1-\phi(u)}
\leq\frac{f(\phi(u),\phi(u))}{1-\phi(u)}.\end{equation}
From (\ref{TFU1+}), we know that when $u\rightarrow1^{-}$, the limit of the right side in the inequality (\ref{TFU1+++}) equals 0. Thus from (\ref{TFU1+++}) we get
$\lim_{u\rightarrow1^{-}}$
$f(\phi(u),\psi(u))/(1-\phi(u))=0$.
Substituting the above results into (\ref{TFU3}), we obtain
\begin{equation}\label{TFU4}\beta=\frac{1+b}{2}.\end{equation}
Then substituting (\ref{TFU4}) into (\ref{1}), we can get (\ref{up2}). \\ \\
(b). If $C(u_{1},u_{2})\equiv0$ on $[0,\varepsilon]\times [0,\varepsilon]$ for a small $\varepsilon>0$, it is easy to see that $\lambda_{L}(C)=0$.
From the assumption $\lim_{t\rightarrow0^{+}}\phi(t)/t^{\alpha}=a\in(0,+\infty)$, we can get $\phi(0)=0$ and $\phi^{[-1]}(0)=0$.
Thus from (\ref{cam2}), we have
$$\lambda_{L}(C_{\phi,\psi})=\lim_{u\rightarrow0^{+}}\frac{\phi^{[-1]}(C(\phi(u),\psi(u)))}{u}= \lim_{u\rightarrow0^{+}}\frac{\phi^{[-1]}(0)}{u}=0.$$
Then $\lambda_{L}(C_{\phi,\psi})=\left(\lambda_{L}(C)\right)^{\frac{1}{\alpha}}=0$ holds in case (1) and case (2).

Next we assume that $C(u_{1}, u_{2})>0$ for all $u_{1},u_{2}>0$. Then we have
$$\lambda_{L}(C_{\phi,\psi})=\lim_{u\rightarrow0^{+}}\frac{\phi^{[-1]}(C(\phi(u),\psi(u)))}{u}\ \ \ \ \ \ \ \ \ \ \ \ \ \ \ \ \ \ \ \ \ \ \ \ \ \ \ \ \ \ \ \ \ \ \ \ \ \ \ \ \ \ \ \ \ \ \ \ \ \ \ \ \ \ \ \ \ \ \ \ \ \ \ \
$$
\begin{equation}\label{2}\ \ \ \ \ \ \ \ \ \ \ \ \ \ \ \ =\lim_{u\rightarrow0^{+}}\left\{\left(\frac{\phi^{[-1]}(C(\phi(u),\psi(u)))}{(C(\phi(u),\psi(u)))^{\frac{1}{\alpha}}}
\cdot\frac{(\phi(u))^{\frac{1}{\alpha}}}{u}\right)\cdot\left(\frac{C(\phi(u),\psi(u))}{\phi(u)}\right)^{\frac{1}{\alpha}}\right\}.\end{equation}
 Next we calculate $\gamma:=\lim_{u\rightarrow0^{+}}C(\phi(u),\psi(u))/\phi(u)$ and
 $$\tilde{\gamma}:=\lim_{u\rightarrow0^{+}}\left(\frac{\phi^{[-1]}(C(\phi(u),\psi(u)))}{(C(\phi(u),\psi(u)))^{\frac{1}{\alpha}}}
\cdot\frac{(\phi(u))^{\frac{1}{\alpha}}}{u}\right)$$
 respectively.

 We first calculate the limit $\tilde{\gamma}$. From the assumption $\lim_{t\rightarrow0^{+}}\phi(t)/t^{\alpha}=a\in(0,+\infty)$, we know that $\lim_{t\rightarrow0^{+}}t/(\phi^{[-1]}(t))^{\alpha}=a$. Since $\phi(0)=0$, we have $C(\phi(0),\psi(0))=0$. Thus we get
\begin{equation}\label{lo+}\tilde{\gamma}=\lim_{u\rightarrow0^{+}}\left(\frac{\phi^{[-1]}(C(\phi(u),\psi(u)))}{(C(\phi(u),\psi(u)))^{\frac{1}{\alpha}}}
\cdot\frac{(\phi(u))^{\frac{1}{\alpha}}}{u}\right)=1.\end{equation}

In the next, we also apply Sandwich Theorem to calculate the limit $\gamma$. Dividing each part of the inequality (\ref{TFUinequality}) by $\phi(u)$, we get
\begin{equation}\label{3}\frac{C(\phi(u),\phi(u))}{\phi(u)}\leq\frac{C(\phi(u),\psi(u))}{\phi(u)}
\leq\frac{C(\psi(u),\psi(u))}{\phi(u)}.\end{equation} Note that from the assumption $\lim_{u\rightarrow0^{+}}\phi(u)/\psi(u)=b$, we know that $\psi(0)=0$ in the case $b\in(0,1]$. From (\ref{cam2}) we have
\begin{equation}\label{lowerequal}\lim_{u\rightarrow0^{+}}\frac{C(\phi(u),\phi(u))}{\phi(u)}=\lim_{u\rightarrow0^{+}}\frac{C(\psi(u),\psi(u))}{\psi(u)}=\lambda_{L}(C).\end{equation}
 \\
(1). We first consider the case $b=1$. Note that the limit of the left side in the inequality (\ref{3}) is given in (\ref{lowerequal}) and is $\lambda_{L}(C)$.
From (\ref{lowerequal}) and the assumption that $\lim_{u\rightarrow0^{+}}\phi(u)/\psi(u)=1$, we also derive
  $$\lim_{u\rightarrow0^{+}}\frac{C(\psi(u),\psi(u))}{\phi(u)}=\lim_{u\rightarrow0^{+}}\left(\frac{C(\psi(u),\psi(u))}{\psi(u)}\cdot\frac{\psi(u)}{\phi(u)}\right)=\lambda_{L}(C).$$
 Thus, letting $u\rightarrow0^{+}$ and applying Sandwich Theorem to the inequality (\ref{3}), we get $$\gamma=\lim_{u\rightarrow0^{+}}\frac{C(\phi(u),\psi(u))}{\phi(u)}=\lambda_{L}(C).$$ Substituting this result and (\ref{lo+}) into (\ref{2}), we can derive the formula (\ref{lo1}).
\\
(2). The result in the case $\lambda_{L}(C)=0$, $b\in(0,1]$ can be calculated similarly and we omit the proof.

The proposition is proved.
\end{proof}

\begin{example}
\label{EX: FGMTail} (Tail dependence coefficient of the TF-FGM copula) Consider the TF-FGM copula defined by (\ref{FGM}) with $\theta\in[0,1]$. Note that the FGM copula has no tail dependence, i.e., $\lambda_{L}(C)=\lambda_{U}(C)=0$.

(a). Assume that $\phi(t)=t^{\beta}$ and $\psi(t)=t^{\beta}(2-t^{\gamma})$, $t\in[0,1]$, where $0\leq\gamma\leq\beta\leq1$. It is easy to see that functions $\phi$ and $\psi$ satisfy the assumptions of Theorem \ref{thm:Distorted copula}, Proposition \ref{prop:utail}(a-2) and Proposition \ref{prop:utail}(b-2). In this case, the upper tail dependence coefficient $\lambda_{U}(C_{\phi,\psi})$ of the TF-FGM copula defined in (\ref{FGM}) equals to $\gamma/\beta$.

(b). Assume that $\phi(t)=t^{\beta}$ and $\psi(t)=t^{\gamma}$, $t\in[0,1]$, where $0\leq\gamma\leq\beta\leq1$. In this case, the assumptions of Theorem \ref{thm:Distorted copula} and Proposition \ref{prop:utail}(a-2) are also satisfied. Then from Proposition \ref{prop:utail}(a-2), it is easy to obtain that $\lambda_{U}(C_{\phi,\psi})=1-\gamma/\beta$.

If $\beta=\gamma$, the assumptions of Proposition \ref{prop:utail}(b-1) are satisfied. It is easy to show that $\lambda_{L}(C_{\phi,\psi})=0$ in this case. Although the assumptions of Proposition \ref{prop:utail}(b) are not satisfied in the case $\beta>\gamma>0$, we can get $\lambda_{L}(C_{\phi,\psi})=0$ by direct calculation.
\end{example}
\

\begin{rem}\label{re:UL}
(1) Eqs (\ref{up1})-(\ref{lo1}) show the influence of the transformation on the tail dependence coefficient of the base copula $C$. We can see that the TF copula $C_{\phi,\psi}$ can have upper tail dependence even though the base copula $C$ has no upper tail dependence.

\noindent (2) In Proposition \ref{prop:utail}(b), the results on the lower tail dependence coefficients of TF copulas, are only given in the case $b=1$ or the case $\lambda_{L}(C)=0$, $b\in(0,1]$. In such cases, the lower tail dependence coefficient is related to $\lambda_{L}(C)$ only. For the other cases, the result is complicated. For instance, choosing the FGM copula of type (\ref{FG}) as the base copula and letting $\phi(t)=t$, $\psi(t)=(1-\alpha)t+\alpha$, $\alpha\in(0,1]$, it is easy to see that the assumptions of Theorem \ref{thm:Distorted copula} are satisfied and thus $C_{\phi,\psi}$ is a copula. Also, notice that $b=\lim_{u\rightarrow0^{+}}(\phi(u)/\psi(u))=0$ for all $\alpha\in(0,1]$ and $\lambda_{L}(C)=0$. By direct calculation, we get the lower tail dependence coefficient $\lambda_{L}(C_{\phi,\psi})=\alpha(1+(1-\alpha)\theta)$ which is related to $\alpha$.
\end{rem}
\

Next we discuss the special case $C=\Pi$. The TF-product copula $\Pi_{\phi,\psi}$ has been given in (\ref{eq:dis++}) and it has multiplicative generators. In the following, we discuss its tail dependence coefficient.

Let
$\lambda:$ $[0,1]\rightarrow$ $[0,+\infty]$ be a continuous and strictly decreasing function such that $\lambda(1)=0$, $\chi:$ $[0,1]\rightarrow$ $[0,+\infty]$ be a continuous and non-increasing function satisfying that $\chi(1)=0$ and $\chi-\lambda$ is increasing in [0,1]. Furthermore, let $\exp\circ(-\lambda):$ $[0,1]\rightarrow$ $[0,1]$ be concave. Setting $\phi(t)=\exp(-\lambda(t))$ and $\psi(t)=\exp(-\chi(t))$, we have $(\phi,\psi)\in\Phi\times\Psi$, $\phi$ is concave and $\phi/\psi$ is increasing in [0,1]. Then, we get an additive form of the TF-product copula with additive generators $\lambda$ and $\chi$ presented as
\begin{equation}\label{C7}\Pi_{\lambda,\chi}(u,v)=\lambda^{[-1]}(\lambda(u\wedge v)+\chi(u\vee v)).\end{equation}
Since the product copula $\Pi$ has no tail dependence, applying Proposition \ref{prop:utail}  we can get the tail dependence coefficient of the TF-product copula.
\begin{cor}\label{cor: Archimedean tail} Let $\Pi_{\lambda,\chi}(u,v)$ be a copula of type (\ref{C7}) generated by the pair $(\lambda,\chi)$.

(1). If for some $\alpha>0$, $$\lim_{t\rightarrow1^{-}}\frac{1-\exp(-\lambda(t))}{(1-t)^{\alpha}}=a\in(0,+\infty),\ \ \lim_{t\rightarrow1^{-}}\frac{1-\exp(-\chi(t))}{1-\exp(-\lambda(t))}=b\in[0,1],$$
 then
$\lambda_{U}(\Pi_{\lambda,\chi})=2-\left(1+b\right)^{\frac{1}{\alpha}}$.

(2). If for some $\alpha>0$,
$$\lim_{t\rightarrow0^{+}}\left(t^{\alpha}\exp(\lambda(t))\right)=a\in(0,+\infty),\ \ \lim_{t\rightarrow0^{+}}(\chi(t)-\lambda(t))=b\in(-\infty,0],$$
then
$\lambda_{L}(\Pi_{\lambda,\chi})=0$.
\end{cor}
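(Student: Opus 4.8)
The plan is to deduce the corollary directly from Proposition \ref{prop:utail} by recognizing $\Pi_{\lambda,\chi}$ as a TF-product copula and then translating the additive hypotheses on $(\lambda,\chi)$ into the multiplicative hypotheses on $(\phi,\psi)$ that the proposition requires. First I would record the dictionary $\phi(t)=\exp(-\lambda(t))$, $\psi(t)=\exp(-\chi(t))$, under which the TF-product copula $\Pi_{\phi,\psi}$ of type (\ref{eq:dis++}) reduces to (\ref{C7}); this is precisely the construction set up in the paragraph preceding the corollary, where it is already checked that $(\phi,\psi)\in\Phi\times\Psi$, that $\phi$ is concave, and that $\phi/\psi$ is increasing in $[0,1]$. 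Since $\Pi$ is supermigrative, the second part of Theorem \ref{thm:Distorted copula} gives $(\phi,\psi)\in\mathfrak{S}(\Pi)$, so the standing hypotheses of Proposition \ref{prop:utail} (namely $(\phi,\psi)\in\mathfrak{S}(C)$ with $\phi$ concave) hold for $C=\Pi$. I would also note at the outset that the product copula has no tail dependence, i.e. $\lambda_{U}(\Pi)=\lambda_{L}(\Pi)=0$.

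For part (1), I would observe that $1-\phi(t)=1-\exp(-\lambda(t))$ and $1-\psi(t)=1-\exp(-\chi(t))$, so the two limits assumed in the corollary are verbatim the two limits $\lim_{t\to1^{-}}(1-\phi(t))/(1-t)^{\alpha}=a$ and $\lim_{t\to1^{-}}(1-\psi(t))/(1-\phi(t))=b$ appearing in Proposition \ref{prop:utail}(a). Because $\lambda_{U}(\Pi)=0$ and $b\in[0,1]$, the hypotheses of case (a-2) are met, and formula (\ref{up2}) yields $\lambda_{U}(\Pi_{\lambda,\chi})=2-(1+b)^{1/\alpha}$.

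For part (2), the only genuine bookkeeping is that the exponential converts additive asymptotics into multiplicative ones, so I would carry out that conversion explicitly. From $\lim_{t\to0^{+}}t^{\alpha}\exp(\lambda(t))=a\in(0,+\infty)$ I obtain $\lim_{t\to0^{+}}\phi(t)/t^{\alpha}=1/a\in(0,+\infty)$ (which in particular forces $\lambda(t)\to+\infty$ and hence $\phi(0)=0$), matching the first limit of Proposition \ref{prop:utail}(b) with $a$ replaced by $1/a$. From $\lim_{t\to0^{+}}(\chi(t)-\lambda(t))=b\in(-\infty,0]$ I obtain $\lim_{t\to0^{+}}\phi(t)/\psi(t)=\lim_{t\to0^{+}}\exp(\chi(t)-\lambda(t))=\exp(b)\in(0,1]$, matching the second limit with value $\exp(b)$. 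Since $\lambda_{L}(\Pi)=0$ and $\exp(b)\in(0,1]$, case (b-2) of Proposition \ref{prop:utail} applies and gives $\lambda_{L}(\Pi_{\lambda,\chi})=0$.

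I do not expect a substantive obstacle, since the statement is a corollary whose content lies entirely in Proposition \ref{prop:utail}; the work is purely that of verifying the corollary's hypotheses coincide with the proposition's hypotheses after the change of variables. The one point requiring care is the direction of the conditions under the exponential map in part (2): the additive condition $\chi-\lambda\to b\le0$ must be seen to correspond exactly to the multiplicative condition $\phi/\psi\to\exp(b)\in(0,1]$, and I would make sure the endpoint $b=0$ (equivalently $\exp(b)=1$) is covered. It is, since case (b-2) admits the full range $(0,1]$, and the boundary value also follows from case (b-1) because $\lambda_{L}(\Pi)=0$.
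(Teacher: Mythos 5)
Your proposal is correct and is exactly the paper's route: the paper proves this corollary by the same reduction, setting $\phi=\exp(-\lambda)$, $\psi=\exp(-\chi)$ (with $(\phi,\psi)\in\Phi\times\Psi$, $\phi$ concave, $\phi/\psi$ increasing already verified in the paragraph preceding the statement) and invoking Proposition \ref{prop:utail} with $\lambda_{U}(\Pi)=\lambda_{L}(\Pi)=0$. Your explicit bookkeeping in part (2) --- the reciprocal constant $1/a$ and the translation $\phi/\psi\to\exp(b)\in(0,1]$, including the endpoint $b=0$ --- simply spells out details the paper leaves implicit.
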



\section{Properties of TF copulas}

This section aims at investigating some properties of the TF copula $C_{\phi,\psi}$, including the totally positive of order 2 (TP$_{2}$) and the concordance order. More precisely, we study how the properties are preserved under the transformation defined by (\ref{c1}).

\subsection{The TP$_{2}$ property}

 Following the definition of \cite{Lehmann(1966)}, given intervals $P$ and $Q$ in $\mathbb{R}$, a function $A:$ $P\times Q\rightarrow\mathbb{R}$ is said to be TP$_{2}$
if, for all $u_{1}$, $u_{2}$, $v_{1}$, $v_{2}\in\mathbb{R}$ such that $u_{1}\leq u_{2}$, $v_{1}\leq v_{2}$,
 $$A(u_{1}, v_{1})A(u_{2}, v_{2})\geq A(u_{1}, v_{2})A(u_{2}, v_{1}).$$
 For the TP$_{2}$ property of the copula, please see \citet{Nelsen(2006)}.  The TP$_{2}$ has been widely studied and used, such as in the multivariate analysis, the reliability theory and the statistical decision procedure. See for example, \cite{Lai and Balakrishnan(2009)}, \cite{Olkin and Liu(2003)} and the references therein.

In this subsection, we study how the TP$_{2}$ property of the basic copula can be transformed to its TF copula. In other words, starting with a base copula $C$ satisfying the property of TP$_{2}$, we would like to investigate the conditions on the functions $\phi$ and $\psi$ such that the TF copula  $C_{\phi,\psi}$ of type (\ref{c1}) also satisfies the TP$_{2}$ property. Preliminarily, we need the following lemma.

\begin{lem}
\label{lem: Tp}\citep[Lemma 3.1]{Durante et al.(2010)} Let the function
$C:$ $[0,1]^{2}\rightarrow [0,1]$ be increasing in every variable. If $C$ satisfies the TP$_{2}$ property, then it is 2-increasing.
 \end{lem}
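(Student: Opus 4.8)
The plan is to fix an arbitrary rectangle $[u_{1},u_{2}]\times[v_{1},v_{2}]$ with $u_{1}\le u_{2}$ and $v_{1}\le v_{2}$ and to show directly that the $C$-volume
$$V_{C}=C(u_{1},v_{1})+C(u_{2},v_{2})-C(u_{1},v_{2})-C(u_{2},v_{1})$$
is nonnegative, which is exactly the 2-increasing property. Writing $a=C(u_{1},v_{1})$, $b=C(u_{1},v_{2})$, $c=C(u_{2},v_{1})$ and $d=C(u_{2},v_{2})$, the hypothesis that $C$ is increasing in each variable gives the orderings $a\le b\le d$ and $a\le c\le d$, while the TP$_{2}$ hypothesis reads $ad\ge bc$; since all four values lie in $[0,1]$, they are nonnegative. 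Because interchanging the two off-diagonal corners $b$ and $c$ leaves both the target inequality $a+d\ge b+c$ and the hypothesis $ad\ge bc$ unchanged, I may assume without loss of generality that $b\le c$, so that $0\le a\le b\le c\le d$.

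With this reduction the lemma collapses to a short inequality on four ordered nonnegative reals: assuming $0\le a\le b\le c\le d$ and $ad\ge bc$, conclude $a+d\ge b+c$. I would prove this by rewriting
$$ad-bc=d(a-b)+b(d-c),$$
so that $ad\ge bc$ is equivalent to $b(d-c)\ge d(b-a)$. This is the key step and the only genuine obstacle: converting the multiplicative TP$_{2}$ inequality into the additive 2-increasing inequality. When $b>0$, dividing by $b$ and using $d/b\ge1$ together with $b-a\ge0$ gives $d-c\ge (d/b)(b-a)\ge b-a$, i.e. $a+d\ge b+c$, as desired.

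The only remaining care is the degenerate case $b=0$, where the division above is not allowed. In that case the ordering forces $a=0$ as well, and the target inequality $a+d\ge b+c$ reduces to $d\ge c$, which holds by monotonicity; thus the conclusion survives. Assembling the two cases yields $V_{C}\ge0$ for every rectangle in $[0,1]^{2}$, which is precisely the 2-increasing property, completing the proof.
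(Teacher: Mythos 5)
Your proof is correct, but there is nothing in the paper to compare it against: Lemma \ref{lem: Tp} is stated with a citation to Durante et al.\ (2010, Lemma 3.1) and used as a black box, so what you have produced is the missing self-contained argument rather than an alternative to one the authors give. Checking it on its own merits, every step stands. The reduction to four numbers is legitimate: coordinatewise monotonicity yields exactly the orderings $a\le b\le d$ and $a\le c\le d$ for $a=C(u_{1},v_{1})$, $b=C(u_{1},v_{2})$, $c=C(u_{2},v_{1})$, $d=C(u_{2},v_{2})$, the paper's definition of TP$_{2}$ gives $ad\ge bc$ for precisely this labelling of corners, and since the hypotheses and the target $a+d\ge b+c$ are all symmetric under swapping $b$ and $c$, the reduction to $0\le a\le b\le c\le d$ is sound. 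The identity $ad-bc=d(a-b)+b(d-c)$ is the right pivot, turning the multiplicative hypothesis into $b(d-c)\ge d(b-a)$; for $b>0$ the chain $d-c\ge (d/b)(b-a)\ge b-a$ is valid because $d\ge b$ and $b\ge a$, and the degenerate case $b=0$ correctly forces $a=0$, collapsing the claim to $d\ge c$, which is monotonicity (this also silently covers $d=0$, where everything vanishes). This is essentially the canonical argument that log-supermodularity plus monotonicity implies supermodularity for nonnegative functions, so your write-up could replace the citation without loss; note also that nonnegativity of $a,b,c,d$, which your division step tacitly relies on, is guaranteed here since $C$ takes values in $[0,1]$, so no extra hypothesis is needed.
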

Then we have the following proposition.
\begin{prop}
\label{prop: Tp1} Let $C$ be a TP$_{2}$ copula and $(\phi,\psi)\in\mathfrak{S}(C)$. If $\phi^{[-1]}\circ\exp:$ $(-\infty,0]\rightarrow[0,1]$ is log-convex, then $C_{\phi,\psi}$ is also a TP$_{2}$ copula.
\end{prop}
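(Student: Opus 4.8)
The plan is to deduce everything from the TP$_2$ property itself: I will show that $C_{\phi,\psi}$ is increasing in each variable, has the margins $C_{\phi,\psi}(u,1)=u$, $C_{\phi,\psi}(1,v)=v$, $C_{\phi,\psi}(u,0)=C_{\phi,\psi}(0,v)=0$ (these are exactly the boundary computations from the first paragraph of the proof of Theorem \ref{thm:Distorted copula}, which use only $\phi(1)=\psi(1)=1$), and that it satisfies the TP$_2$ inequality. Then Lemma \ref{lem: Tp} upgrades TP$_2$ plus monotonicity to the $2$-increasing property, so $C_{\phi,\psi}$ is a copula, and it is TP$_2$ by construction; note this route does not require $\phi$ to be concave. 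Throughout write $g=\phi^{[-1]}$ and $H=\log\bigl(\phi^{[-1]}\circ\exp\bigr)$, which is convex by hypothesis and increasing because $\phi^{[-1]}$, $\exp$ and $\log$ are. For the rectangle $[u_1,u_2]\times[v_1,v_2]$ put $A_{ij}=C(\phi(u_i\wedge v_j),\psi(u_i\vee v_j))$, so that $C_{\phi,\psi}(u_i,v_j)=g(A_{ij})$ and the TP$_2$ inequality to be proved is $g(A_{11})g(A_{22})\ge g(A_{12})g(A_{21})$.

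The first step is a multiplicative form of the Marshall--Olkin lemma. Taking logarithms, the inequality becomes $H(\log A_{11})+H(\log A_{22})\ge H(\log A_{12})+H(\log A_{21})$ (configurations in which some $A_{ij}$ equals $0$ make one side vanish and are trivial, the general case following by continuity). Since $H$ is convex and increasing, Lemma \ref{lem: MO} applied to $H$ at the four arguments $\log A_{ij}$ delivers this provided $\log A_{11}\le\min(\log A_{12},\log A_{21})\le\max(\log A_{12},\log A_{21})\le\log A_{22}$ and $\log A_{11}+\log A_{22}\ge\log A_{12}+\log A_{21}$. Equivalently, it suffices that the inner kernel $A(u,v)=C(\phi(u\wedge v),\psi(u\vee v))$ be increasing in each variable and satisfy $A_{11}A_{22}\ge A_{12}A_{21}$, i.e. that $A$ be itself TP$_2$. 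Monotonicity of $A$ is immediate from that of $C$, $\phi$, $\psi$ on each side of the diagonal, with the two branches of \eqref{c1} matching continuously on it, so everything reduces to the TP$_2$ property of $A$.

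To prove $A$ TP$_2$ I split according to whether $[u_1,u_2]\times[v_1,v_2]$ meets the main diagonal, as in the three cases of Theorem \ref{thm:Distorted copula}. When it lies entirely in one of the triangles $\{u\ge v\}$ or $\{u\le v\}$, the four values $A_{ij}$ are $C$ evaluated on the axis-aligned grid $\{\phi(u_1),\phi(u_2)\}\times\{\psi(v_1),\psi(v_2)\}$ (or its reflection), so $A_{11}A_{22}\ge A_{12}A_{21}$ is exactly the TP$_2$ property of $C$. The \textbf{main obstacle is the straddling case}, where the rectangle crosses the diagonal and the four $A_{ij}$ mix the two branches of \eqref{c1}. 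Taking the representative nested configuration $u_1\le v_1\le v_2\le u_2$ and writing $p=\phi(u_1)$, $q=\phi(v_1)$, $r=\phi(v_2)$, $Q=\psi(v_1)$, $R=\psi(v_2)$, $S=\psi(u_2)$ (so $p\le q\le r$ and $Q\le R\le S$), one computes $A_{11}=C(p,Q)$, $A_{12}=C(p,R)$, $A_{21}=C(q,S)$, $A_{22}=C(r,S)$, and the goal is $C(p,Q)\,C(r,S)\ge C(p,R)\,C(q,S)$.

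This is exactly where the two hypotheses on $C$ interlock. The TP$_2$ inequality of $C$ on rows $\{q,r\}$ and columns $\{Q,S\}$ gives $C(q,Q)C(r,S)\ge C(q,S)C(r,Q)$, while the defining inequality \eqref{d1} of $\mathfrak{S}(C)$, read at $v_1\le v_2$, gives $C(q,R)=C(\phi(v_1),\psi(v_2))\le C(\phi(v_2),\psi(v_1))=C(r,Q)$; together these yield $C(q,Q)C(r,S)\ge C(q,S)C(q,R)$. Combining this with the TP$_2$ inequality of $C$ on rows $\{p,q\}$ and columns $\{Q,R\}$, namely $C(p,Q)C(q,R)\ge C(p,R)C(q,Q)$, by multiplying the two relations and cancelling the common positive factor $C(q,Q)C(q,R)$ (the degenerate case where an argument is $0$ being trivial), produces $C(p,Q)C(r,S)\ge C(p,R)C(q,S)$, as required; the remaining straddling sub-configurations are handled identically using the appropriate instance of \eqref{d1}. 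This establishes that $A$ is TP$_2$; feeding it back through the Marshall--Olkin step of the second paragraph shows $C_{\phi,\psi}$ is TP$_2$, and Lemma \ref{lem: Tp} then finishes the proof that $C_{\phi,\psi}$ is a TP$_2$ copula.
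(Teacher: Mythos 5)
Your proof is correct, and its skeleton coincides with the paper's: both verify the boundary conditions exactly as in the proof of Theorem \ref{thm:Distorted copula} (which indeed uses only $\phi(1)=\psi(1)=1$ and the pseudo-inverse identities, not concavity), both apply Lemma \ref{lem: MO} to the increasing convex function $H=\log\left(\phi^{[-1]}\circ\exp\right)$, and both finish with Lemma \ref{lem: Tp}. Where you genuinely diverge is in the treatment of general rectangles. The paper never proves the TP$_2$ inequality directly for a rectangle straddling the diagonal: it shows $\log C_{\phi,\psi}$ is $2$-increasing on the three canonical configurations of Figure \ref{fig:case} (the diagonal case being a symmetric square, where \eqref{d1} enters exactly once) and then exploits the additivity of $V_{\log C_{\phi,\psi}}$ under the decomposition of an arbitrary rectangle into at most three such pieces. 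You instead prove the stronger intermediate statement that the inner kernel $A(u,v)=C(\phi(u\wedge v),\psi(u\vee v))$ is itself TP$_2$ on \emph{every} rectangle, attacking the straddling configuration $u_1\le v_1\le v_2\le u_2$ head-on by chaining two TP$_2$ inequalities of $C$ with one instance of \eqref{d1} and cancelling $C(q,Q)C(q,R)$, after which Lemma \ref{lem: MO} is invoked a single time; your claim that the remaining straddling configurations are handled identically checks out, since $u_1\le v_1\le u_2\le v_2$ needs only one TP$_2$ step plus \eqref{d1} at the pair $(v_1,u_2)$, and the mirrored configurations reduce to these by the symmetry of $A$. What your route buys: no rectangle-splitting, a reusable intermediate fact ($A$ is TP$_2$), and an explicit record that concavity of $\phi$ is nowhere needed. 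What it costs: the cancellation step, whose ``trivial'' degenerate case deserves one more line. If $C(q,R)=0$, then $C(p,R)=0$ by monotonicity (as $p\le q$) and the target inequality is immediate; the remaining possibility $C(q,Q)=0<C(q,R)$ cannot occur, because TP$_2$ applied with $u_2=v_2=1$ gives $C(u,v)\ge uv$, so $C(q,Q)=0$ forces $q=0$ or $Q=0$, and either alternative (using $\phi\le\psi$ from Remark \ref{re:inequality} in the second case) yields $p=q=0$, hence again a vanishing right-hand side. With that line added your argument is complete; your glossing of the zero values $\log A_{ij}=-\infty$ in the Marshall--Olkin step is at the same level of rigor as the paper's own proof, which applies Lemma \ref{lem: MO} to $\log\left(\phi^{[-1]}(e^{t})\right)$ without comment on this point.
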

\begin{proof}
In order to verify that $C_{\phi,\psi}$ is TP$_{2}$, we firstly prove that $\log C_{\phi,\psi}$ is 2-increasing. Suppose that the rectangle $R=[u_{1},u_{2}]\times[v_{1},v_{2}]$ is a
subset of the unit square. Define
$$b_{ij}=C(\phi(u_{i}\wedge v_{j}), \psi(u_{i}\vee v_{j})),\ i,j\in\{1,2\}.$$
Then $V_{\log C_{\phi,\psi}}(R)=\log (\phi^{[-1]}(b_{11})) +\log (\phi^{[-1]}(b_{22}))- \log (\phi^{[-1]}(b_{12}))-\log (\phi^{[-1]}(b_{21})).$
Note that each rectangle $R$ can be decomposed as the union of at most three non-intersect sub-rectangles
$R_{i}$ described in Figure \ref{fig:case}. Furthermore,
$V_{\log C_{\phi,\psi}}(R)$ is the sum of the values $V_{\log C_{\phi,\psi}}(R_{i})$. Thus, we can verify that $\log C_{\phi,\psi}$ is 2-increasing if $\log C_{\phi,\psi}$ is 2-increasing in the three cases described in Figure \ref{fig:case}. We only prove that $\log C_{\phi,\psi}$ is 2-increasing in Case 3, i.e., $u_{1}=v_{1}$, $u_{2}=v_{2}$ and $u_{1}\leq u_{2}$, since the other two cases can be proved in a similar way.

In Case 3, $b_{11}=C(\phi(u_{1}),\psi(u_{1}))$, $b_{12}=C(\phi(u_{1}),\psi(u_{2}))=b_{21}$, $b_{22}=C(\phi(u_{2}),\psi(u_{2}))$. Since $\phi$, $\psi$ are increasing in [0,1] and $C$ is increasing in every variable, it follows that
\begin{equation}\label{eq:tp3}\log b_{11}\leq \log(\min(b_{12}, b_{21}))\leq \log(\max(b_{12}, b_{21}))\leq \log b_{22}.\end{equation}
From the assumption that $C$ is TP$_{2}$, we have $b_{11}b_{22}\geq C(\phi(u_{1}),\psi(u_{2}))\cdot C(\phi(u_{2}),\psi(u_{1}))$. Since $(\phi,\psi)\in\mathfrak{S}(C)$, then the inequality (\ref{d1}) holds such that
$C(\phi(u_{1}),\psi(u_{2}))\cdot C(\phi(u_{2}),\psi(u_{1}))
\geq C(\phi(u_{1}),\psi(u_{2}))\cdot C(\phi(u_{1}),\psi(u_{2}))=b_{12} b_{21}$. Combining the above inequalities, we know $b_{11}b_{22}\geq b_{12} b_{21}$ and applying the function $\log$ to this inequality, we get
\begin{equation}\label{eq:tp1}\log b_{11}+\log b_{22}\geq \log b_{12}+\log b_{21}.\end{equation}
Also, from the assumption that $\phi^{[-1]}\circ\exp:$ $(-\infty,0]\rightarrow[0,1]$ is log-convex, we know $\log(\phi^{[-1]}(e^{t}))$ is convex in $(-\infty,0]$. Therefore, from inequalities (\ref{eq:tp3}) and (\ref{eq:tp1}) and applying Lemma \ref{lem: MO} to the function $\log(\phi^{[-1]}(e^{t}))$, we obtain
\begin{equation}\label{eq:tp1++}V_{\log C_{\phi,\psi}}(R)=\log (\phi^{[-1]}(b_{11})) +\log (\phi^{[-1]}(b_{22}))- \log (\phi^{[-1]}(b_{12}))-\log (\phi^{[-1]}(b_{21}))\geq 0.\end{equation}
This shows that $\log C_{\phi,\psi}$ is 2-increasing.
From the inequality (\ref{eq:tp1++}), we also get
$$\phi^{[-1]}(b_{11})\phi^{[-1]}(b_{22})\geq\phi^{[-1]}(b_{12})\phi^{[-1]}(b_{21})$$
for arbitrary rectangle $R$ contained in the unit square. Hence, $C_{\phi,\psi}$ is a TP$_{2}$ function.

Now we prove $C_{\phi,\psi}$ is a copula. Form Lemma \ref{lem: Tp}, it is obvious that $C_{\phi,\psi}$ is also 2-increasing. Furthermore, we have proved that $C_{\phi,\psi}(u,v)$ satisfies the boundary conditions in the proof of Theorem \ref{thm:Distorted copula}. Thus, $C_{\phi,\psi}$ is a TP$_{2}$ copula.
\end{proof}

By Theorem \ref{thm:Distorted copula} and  Proposition \ref{prop: Tp1}, we also have the following results about the TP$_{2}$ property of $C_{\phi,\psi}$.
\begin{cor}\label{cor: power TP2} Let $(\phi,\psi)\in\Phi\times\Psi$ and $C$ be a TP$_{2}$ copula.

\noindent (1). If $C$ is supermigrative, $\phi/\psi$ is increasing in [0,1] and $\phi^{[-1]}\circ\exp:$ $(-\infty,0]\rightarrow[0,1]$ is log-convex, then $C_{\phi,\psi}$ is a TP$_{2}$ copula.

\noindent (2). Let $\phi(t)=t^{\alpha}$ for $\alpha>0$ . If the assumption (\ref{d1}) holds, then $C_{\phi,\psi}$ is a TP$_{2}$ copula.

\end{cor}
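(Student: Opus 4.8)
The plan is to reduce both statements to Proposition \ref{prop: Tp1}, which already transfers the TP$_2$ property from a base copula $C$ to $C_{\phi,\psi}$ whenever $C$ is TP$_2$, $(\phi,\psi)\in\mathfrak{S}(C)$, and $\phi^{[-1]}\circ\exp$ is log-convex on $(-\infty,0]$. In each part I have a TP$_2$ base copula by hypothesis, so it suffices to verify the remaining two conditions.

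For part (1), the membership $(\phi,\psi)\in\mathfrak{S}(C)$ is exactly what the second half of the proof of Theorem \ref{thm:Distorted copula} establishes: there it is shown that if $C$ is supermigrative and $\phi/\psi$ is increasing in $[0,1]$, then the inequality \eqref{d1} holds for all $u\leq v$, i.e.\ $(\phi,\psi)\in\mathfrak{S}(C)$. Since the log-convexity of $\phi^{[-1]}\circ\exp$ is assumed directly, all three hypotheses of Proposition \ref{prop: Tp1} are met, and the conclusion follows at once.

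For part (2), the hypothesis that \eqref{d1} holds is precisely $(\phi,\psi)\in\mathfrak{S}(C)$, so the only thing left to check is the log-convexity condition for $\phi(t)=t^{\alpha}$. Here $\phi(0)=0$, so $\phi^{[-1]}=\phi^{-1}$ and $\phi^{-1}(t)=t^{1/\alpha}$; consequently $\phi^{[-1]}(e^{s})=e^{s/\alpha}$ for $s\in(-\infty,0]$, and $\log\bigl(\phi^{[-1]}(e^{s})\bigr)=s/\alpha$ is linear, hence convex. Thus $\phi^{[-1]}\circ\exp$ is log-convex, and Proposition \ref{prop: Tp1} again yields that $C_{\phi,\psi}$ is a TP$_2$ copula.

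There is essentially no obstacle here: both parts are immediate corollaries once one recognizes that the proof of Theorem \ref{thm:Distorted copula} already delivers the set membership $(\phi,\psi)\in\mathfrak{S}(C)$ in the supermigrative case, and that the power map $\phi(t)=t^{\alpha}$ makes $\phi^{[-1]}\circ\exp$ log-linear, so trivially log-convex. The only point requiring any care is to confirm that $\phi(t)=t^{\alpha}$ indeed lies in $\Phi$ --- it is continuous, strictly increasing, and satisfies $\phi(1)=1$ --- so that $(\phi,\psi)\in\Phi\times\Psi$ and the framework of Proposition \ref{prop: Tp1} applies.
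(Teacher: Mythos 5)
Your proof is correct and takes essentially the same route as the paper: part (1) combines the supermigrative argument from the proof of Theorem \ref{thm:Distorted copula} (which gives $(\phi,\psi)\in\mathfrak{S}(C)$) with Proposition \ref{prop: Tp1}, and part (2) reduces to the observation that $\phi^{[-1]}(e^{s})=e^{s/\alpha}$ makes $\phi^{[-1]}\circ\exp$ log-linear, hence log-convex. The only difference is that you spell out explicitly the short computations the paper's two-line proof leaves implicit.
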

\begin{proof}
From the definition (\ref{d3}) of the supermigrative copula, using Theorem \ref{thm:Distorted copula} and Proposition \ref{prop: Tp1}, we get the first part of the corollary.

Note that every power function $\phi=t^{\alpha}$, $\alpha>0$, has the properties that $\phi^{[-1]}$ is also a power function and $\phi^{[-1]}\circ\exp$ is log-convex. Then, we get the second part of the corollary.
\end{proof}

These results allow us to construct the TP$_{2}$ multiple parameters' copula family from a known TP$_{2}$ copula. In the following, we give two examples discussed in the previous sections.
\begin{example}
\label{EX: FGMTP2} Choose the FGM copula as the base copula $C$. Then $C$ is TP$_{2}$ and supermigrative when $\theta\in[0,1]$.
Assume that $\phi(t)=t^{\beta}$ and $\psi(t)=t^{\gamma}$, where $0\leq \gamma\leq\beta\leq1$. From Corollary \ref{cor: power TP2}, we know that the TF-FGM copula defined in (\ref{FGMPower})
 is TP$_{2}$.
\end{example}
\

\begin{example}
\label{EX: CATP2} Let the base copula $C$ be a member of the Cuadras-Aug\'{e} family. Then $C$ is TP$_{2}$ for all $0\leq\alpha\leq1$. Assume that $\phi(t)=t^{\beta}$ and $\psi(t)=t^{\beta}(2-t^{\gamma})$ for all $t\in[0,1]$, where $0\leq\gamma\leq\beta\leq1$. Then, the TF copula of type (\ref{capower}) also defines a family of TP$_{2}$ copulas.
\end{example}

\subsection{Concordance order }

  We first give the definition of the concordance order between two copulas. Given two copulas $C$ and $D$, if $C(u,v)\leq D(u,v)$ for any $u,v\in[0,1]$, then $D$ is said to more concordant than $C$, denoted by $C\prec D$ \citep[see][for more details]{Joe(1997)}. In the following proposition, we discuss the concordance order of TF copulas.
\begin{prop}
\label{prop: co} (1). Let $C$ and $D$ be two bivariate copulas, $(\phi,\psi)\in\mathfrak{S}(C)\bigcap\mathfrak{S}(D)$ and $\phi$ be concave in [0,1]. If $C\prec D$, then $C_{\phi,\psi}\prec D_{\phi,\psi}$. Conversely, if $C_{\phi,\psi}\prec D_{\phi,\psi}$, then $C(u,v)\leq D(u,v)$ on $\{(u,v)\in[\phi(0),1]\times[\psi(0),1]$| $u\leq v$, $C(u,v)\geq \phi(0)\}$.

(2). Let $C$ be a bivariate copula, $(\phi,\psi_{1}), (\phi,\psi_{2})\in\mathfrak{S}(C)$ and $\phi$ be concave in [0,1]. Then $C_{\phi,\psi_{1}}\prec C_{\phi,\psi_{2}}$ if and only if $\psi_{1}(u)\leq\psi_{2}(u)$ for all $u\in[0,1]$.

(3). Let $C$ be a bivariate copula, $(\phi_{1},\psi), (\phi_{2},\psi)\in\mathfrak{S}(C)$ and $\phi_{1}$, $\phi_{2}$ be concave in [0,1]. Then $C_{\phi_{1},\psi}\prec C_{\phi_{2},\psi}$ if and only if $C(f(s),t)\leq f(C(s,t))$ for all $0\leq s\leq t\leq1$, where $f=\phi_{1}\circ\phi^{[-1]}_{2}$.
\end{prop}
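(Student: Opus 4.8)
The plan is to establish all three parts directly from the definition $C_{\phi,\psi}(u,v)=\phi^{[-1]}(C(\phi(u\wedge v),\psi(u\vee v)))$, exploiting the monotonicity of $\phi^{[-1]}$ throughout. First I would record the standing facts I will use repeatedly: since $\phi\in\Phi$ is concave and $(\phi,\psi)$ lies in the relevant $\mathfrak{S}$ sets, Theorem \ref{thm:Distorted copula} guarantees that each transformed function appearing below is a genuine copula; moreover $\phi^{[-1]}$ is increasing on $[0,1]$ and strictly increasing on $[\phi(0),1]$, and satisfies \eqref{eq:pphi}--\eqref{eq:pphi1}.

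For part (1), the forward direction is immediate: $C\prec D$ means $C(x,y)\le D(x,y)$ pointwise, so applying it at $(x,y)=(\phi(u\wedge v),\psi(u\vee v))$ and then the increasing map $\phi^{[-1]}$ gives $C_{\phi,\psi}(u,v)\le D_{\phi,\psi}(u,v)$. The converse is where the pseudo-inverse forces the restriction to the stated set. Assuming $C_{\phi,\psi}\prec D_{\phi,\psi}$, I would fix a point $(s,t)$ with $\phi(0)\le s\le t$, $\psi(0)\le t$, $C(s,t)\ge\phi(0)$, write $s=\phi(u)$ and $t=\psi(v)$ for suitable $u\le v$ (solvable because $\phi$ is a strict bijection onto $[\phi(0),1]$ and $\psi$ is onto its range), and then peel off $\phi$ by applying it to the copula inequality; the hypothesis $C(s,t)\ge\phi(0)$ is exactly what makes $\phi(\phi^{[-1]}(C(s,t)))=C(s,t)$ by \eqref{eq:pphi1}, recovering $C(s,t)\le D(s,t)$. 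The condition $u\le v$ is needed so the argument of $C_{\phi,\psi}$ is genuinely $(\phi(u\wedge v),\psi(u\vee v))$.

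For part (2), with $\phi$ fixed, the map $\psi\mapsto C(\phi(u\wedge v),\psi(u\vee v))$ is monotone in $\psi$ since $C$ is increasing in its second argument, so $\psi_1\le\psi_2$ pointwise yields $C_{\phi,\psi_1}\prec C_{\phi,\psi_2}$ after applying $\phi^{[-1]}$. For the converse I would evaluate the concordance inequality near the boundary $u=1$, where $C_{\phi,\psi}(u,1)=u$ degenerates, and instead read off $\psi_i$ by specializing to points forcing the second coordinate: taking $v=1$ and $u$ slightly below isolates $\psi_i(v)$ through $C_{\phi,\psi_i}(u,v)=\phi^{[-1]}(C(\phi(u),\psi_i(v)))$, so the inequality propagates back to $\psi_1(v)\le\psi_2(v)$ using strict monotonicity of $\phi^{[-1]}$ and of $C$ in a suitable regime.

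Part (3) is the substantive one and I expect it to be the main obstacle. Writing $f=\phi_1\circ\phi_2^{[-1]}$, the comparison $C_{\phi_1,\psi}\prec C_{\phi_2,\psi}$ becomes, after substituting $s=\phi_2(u\wedge v)$, $t=\psi(u\vee v)$, a statement relating $\phi_1^{[-1]}(C(f(s),t))$ to $\phi_2^{[-1]}(C(s,t))$; composing with $\phi_2$ and using $\phi_2\circ\phi_1^{[-1]}=f^{-1}$ (where invertibility holds on the appropriate range) converts this into $C(f(s),t)\le f(C(s,t))$. The delicate points are tracking the domains where the pseudo-inverses act as true inverses, confirming $f$ is well-defined, increasing, and maps into the correct interval, and checking that the admissible pairs $(s,t)=(\phi_2(u\wedge v),\psi(u\vee v))$ with $u\le v$ sweep out exactly $\{0\le s\le t\le1\}$ after using Remark \ref{re:inequality} (which gives $\phi_2\le\psi$, hence $s\le t$). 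I would carry out the substitution carefully on $\{u\le v\}$, verify the boundary behavior separately where $\phi_i(0)$ may be positive, and then argue the equivalence is a genuine iff by reversing each step.
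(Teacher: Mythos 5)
Your skeleton for part (3) is the same as the paper's: substitute $s=\phi_{2}(u\wedge v)$, $t=\psi(u\vee v)$, invoke Remark \ref{re:inequality} to get $\phi_{2}\leq\psi$, reduce concordance to $\phi_{1}^{[-1]}(C(f(s),t))\leq\phi_{2}^{[-1]}(C(s,t))$ on a triangle, and peel off the pseudo-inverses. But two of your concrete mechanisms fail. First, you propose to obtain $C(f(s),t)\leq f(C(s,t))$ by ``composing with $\phi_{2}$ and using $\phi_{2}\circ\phi_{1}^{[-1]}=f^{-1}$''. That identity is false in general: $f=\phi_{1}\circ\phi_{2}^{[-1]}$ is constant, equal to $\phi_{1}(0)$, on $[0,\phi_{2}(0)]$, so it has no inverse there; moreover applying $\phi_{2}$ to the right-hand side yields $\max\{C(s,t),\phi_{2}(0)\}$ rather than $C(s,t)$. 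The paper avoids this entirely by composing with $\phi_{1}$, not $\phi_{2}$: for necessity it uses $\phi_{1}\circ\phi_{1}^{[-1]}(x)=\max\{x,\phi_{1}(0)\}\geq x$ together with the exact identity $\phi_{1}\circ\phi_{2}^{[-1]}=f$, giving $C(f(s),t)\leq\phi_{1}\circ\phi_{1}^{[-1]}(C(f(s),t))\leq\phi_{1}\circ\phi_{2}^{[-1]}(C(s,t))=f(C(s,t))$; for sufficiency it applies $\phi_{1}^{[-1]}$ and uses $\phi_{1}^{[-1]}\circ f=\phi_{2}^{[-1]}$, which follows from \eqref{eq:pphi}. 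Second, your claim that the pairs $(s,t)=(\phi_{2}(u\wedge v),\psi(u\vee v))$ with $u\leq v$ ``sweep out exactly'' $\{0\leq s\leq t\leq 1\}$ is wrong: the image is $\{(s,t):\phi_{2}(0)\leq s\leq 1,\ \psi(\phi_{2}^{-1}(s))\leq t\leq 1\}$, and $\psi(\phi_{2}^{-1}(s))$ can strictly exceed $s$ (take $\phi_{2}(t)=t$, $\psi(t)=\sqrt{t}$: the point $(s,t)=(1/4,3/10)$ is never attained). The paper at least treats the range $s<\phi_{2}(0)$ explicitly (both sides vanish there), and you should replicate that bookkeeping rather than assert surjectivity.

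The same representability issue breaks your converse in part (1): given $(s,t)$ in the stated set, solving $s=\phi(u)$, $t=\psi(v)$ \emph{with} $u\leq v$ is impossible whenever $t<\psi(\phi^{-1}(s))$ (again $\phi(t)=t$, $\psi(t)=\sqrt{t}$, $(s,t)=(1/4,3/10)$), so your peeling argument only reaches the smaller swept set, not the set in the statement. For the converse in part (2), your mechanism gives nothing at all: at $v=1$ one has $C_{\phi,\psi_{i}}(u,1)=u$ identically, independent of $\psi_{i}$, and no boundary limit ``isolates'' $\psi_{i}(v)$; extracting $\psi_{1}\leq\psi_{2}$ from $\phi^{[-1]}(C(\phi(u),\psi_{1}(v)))\leq\phi^{[-1]}(C(\phi(u),\psi_{2}(v)))$ requires strict monotonicity of $C$ in its second argument on the relevant range, which you never establish and which can genuinely fail: for $C=W$, $\phi(t)=t$, $\psi_{2}(t)=t$ and $\psi_{1}(t)=t+\max\{0,(1/4)(1-2t)\}$, both pairs lie in $\mathfrak{S}(W)$ and $C_{\phi,\psi_{1}}=C_{\phi,\psi_{2}}$, yet $\psi_{1}>\psi_{2}$ on $[0,1/2)$. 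To be fair, the paper itself proves only (3) and dismisses (1) and (2) as ``similar'', so there is no model argument to match; but as written your sketches for these two converses are not repairable by routine epsilon-management --- the first needs the domain restriction to the attainable set, and the second needs additional strictness hypotheses on $C$.
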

\begin{proof}
We only prove (3). The proofs of (1) and (2) are similar.

From the definition of $C_{\phi,\psi}$ given in (\ref{c1}), we can know that $C_{\phi_{1},\psi}\prec C_{\phi_{2},\psi}$ if and only if
\begin{equation}
\label{eq: co}\phi_{1}^{[-1]}(C(\phi_{1}(u\wedge v),\psi(u\vee v)))\leq \phi_{2}^{[-1]}(C(\phi_{2}(u\wedge v),\psi(u\vee v))),\ u,v\in[0,1].\end{equation}

Let $s=\phi_{2}(u\wedge v)$ and $t=\psi(u\vee v)$. From the assumption $(\phi_{2},\psi)\in\mathfrak{S}(C)$, we know $C(\phi_{2}(u),\psi(v))\leq C(\phi_{2}(v),\psi(u))$ for all $u\leq v$, $(u,v)\in[0,1]^{2}$. Then, from Remark \ref{re:inequality}, we have $\phi_{2}(u)\leq \psi(u)$, $u\in[0,1]$, such that
$\phi_{2}(0)\leq s \leq t\leq1$. Hence, the inequality (\ref{eq: co}) is equivalent to that $\phi_{1}^{[-1]}(C(f(s),t))\leq \phi_{2}^{[-1]}(C(s,t))$
for all $\phi_{2}(0)\leq s \leq t\leq1$. Furthermore, if $s<\phi_{2}(0)$, then $\phi_{1}^{[-1]}(C(f(s),t))=\phi_{2}^{[-1]}(C(s,t))=0$. Therefore, $C_{\phi_{1},\psi}\prec C_{\phi_{2},\psi}$ if and only if \begin{equation}
\label{eq: co1}\phi_{1}^{[-1]}(C(f(s),t))\leq\phi_{2}^{[-1]}(C(s,t)),\ 0\leq s\leq t\leq 1.\end{equation}

We start by proving the necessity. Assume that $C_{\phi_{1},\psi}\prec C_{\phi_{2},\psi}$. Then (\ref{eq: co1}) follows. Applying $\phi_{1}$ to the both sides of (\ref{eq: co1}) and noting that $\phi_{1}$ is non-decreasing in [0,1] as well as $\phi_{1}(\phi_{1}^{[-1]}(t))\geq t$ for all $t\in[0,1]$, it yields
that $$C(f(s),t)\leq\phi_{1}\circ\phi_{1}^{[-1]}(C(f(s),t))\leq\phi_{1}\circ\phi_{2}^{[-1]}(C(s,t))=f(C(s,t))$$
for all $0\leq s\leq t\leq 1$.

Next we prove the sufficiency. If $f$ and $C$ satisfy $C(f(s),t)\leq f(C(s,t))$, $0\leq s\leq t\leq 1$, then applying $\phi_{1}^{[-1]}$ to the both sides of this inequality and noting that $\phi_{1}^{[-1]}$ is non-decreasing in $[0,1]$ and $\phi_{1}^{[-1]}\circ f=\phi_{2}^{[-1]}$, it yields that (\ref{eq: co1}) holds, i.e., $C_{\phi_{1},\psi}\prec C_{\phi_{2},\psi}$.

This completes the proof.
\end{proof}

\section{\label{sec:Conclusion}Conclusions}

In this paper, we presented a family of bivariate copulas by transforming a given copula function with two increasing functions, named as transformed copula. Conditions guaranteeing that the transformed function is a copula function are provided, and several classes of transformed copulas are given. A singular component along the main diagonal of the transformed copula is verified, and the tail dependence coefficients of the transformed copulas are obtained. The transformed copula preserve some orders and properties of the base copula $C$, such as the TP$_{2}$ property and the concordance order.

\subsection*{Acknowledgement}
Xie's research was supported by the National Natural Science Foundation of China
(Grants No. 11561047). Yang's research was supported by the National Natural Science Foundation of China
(Grants No. 11671021, Grants No. 11271033).

\appendix

\section{Proof of Remark \ref{REM1}}

When $C$ is exchangeable, $C_{2}(u,v)=\frac{\partial C(u,v)}{\partial v}=\frac{\partial C(v,u)}{\partial v}=C_{1}(v,u)=g(v,u)h(v)$. Also, if $g(u,v)$ is exchangeable, then $C_{2}(u,v)=g(u,v)h(v)$. Furthermore, if $\int_{\psi(u)}^{\phi(u)}h(t)dt$ is strictly increasing in $[0,1]$, we know that $h(\phi(u))\phi^{'}(u)-h(\psi(u))\psi^{'}(u)>0$ for all $u\in[0,1]$.

    Summarizing the above results, we conclude that for all $u\in[0,1]$,
     $$C_{1}(\phi(u),\psi(u))\phi'(u)=g(\phi(u),\psi(u))h(\phi(u))\phi'(u)>g(\phi(u),\psi(u))h(\psi(u))\psi'(u)=C_{2}(\phi(u),\psi(u))\psi'(u),$$
    i.e., $S\neq\emptyset$. From Theorem \ref{thm:singular}, we know that $C_{\phi,\psi}$ contains a singular components along the main diagonal.


\begin{thebibliography}{Shaliastovich and Tauchen(2008)}
\bibitem[Alvoni et al. (2009)]{Alvoni et al. (2009)} Alvoni, E., Papini, P.L., and Spizzichino, F. (2009). On a class of transformations of copulas and quasi-copulas. Fuzzy Sets and Systems, 160:334-343.

\bibitem[Ash(2000)]{Ash(2000)} Ash, R.B., (2000). Probability and measure theory, 2nd edition. Harcourt/Academic Press, Burlington, Massachusetts.

\bibitem[Bassan and Spizzichino(2005)]{Bassan and Spizzichino(2005)} Bassan, B., and Spizzichino, F. (2005). Relations among univariate aging, bivariate aging and dependence for exchangeable lifetimes. Journal of Multivariate Analysis, 93:313-339.

\bibitem[Bo and Capponi(2015)]{Bo and Capponi(2015)} Bo, L., and Capponi, A. (2015). Counterparty risk for CDS: default clustering effects. Journal of Banking and Finance, 52:29-42.


\bibitem[Cuadras and Auge(1981)]{Cuadras and Auge(1981)}Cuadras, C.M., and Aug\'{e}, J. (1981). A continuous general multivariate distribution and its properties. Communications in Statistics: Theory and Methods, 10:339-353.

\bibitem[Denuit et al.(2005)]{Denuit et al.(2005)} Denuit M., Dhaene J., Goovaerts M., and Kaas R. (2005). Actuarial Theory for Dependent Risks: Measures, Orders and Models. John Wiley and Sons Ltd, Chichester.

\bibitem[Durante et al.(2010)]{Durante et al.(2010)}Durante, F., Foschi,
R., and Sarkoci, P. (2010). Distorted copulas: constructions and tail
dependence. Communications in Statistics: Theory and Methods, 39:2288-2301.

\bibitem[Durante et al.(2007)]{Durante et al.(2007)} Durante F., Quesada-Molina J.J., and Sempi C. (2007). A generalization  of the Archimedean class of bivariate copulas.  Annals of the Institute of Statistical Mathematics, 59:487-498.

 \bibitem[Durante and Ricci(2012)]{Durante and Ricci(2012)} Durante, F, and Ricci, R.G. (2012). Supermigrative copulas and positive dependence. ASTA Advances in Statistical Analysis, 96:327-342.

\bibitem[Genest and Rivest(2001)]{Genest and Rivest(2001)}Genest,
C., and Rivest, L.P. (2001). On the multivariate probability integral
transformation. Statistics and Probability Letters, 53:391-399.


\bibitem[Joe(1997)]{Joe(1997)}Joe, H. (1997). Multivariate Models
and Dependence Concepts. Chapman \& Hall, Boca Raton.

\bibitem[Klement et al.(2005)]{Klement et al.(2005)}Klement, E.P.,
Mesiar, R., and Pap, E. (2005). Transformations of copulas. Kybernetika,
41:425-434.

\bibitem[Lai and Balakrishnan(2009)]{Lai and Balakrishnan(2009)}
Lai, C.D., and Balakrishnan, N. (2009). Continuous Bivariate Distributions. Springer, New York.

\bibitem[Lehmann(1966)]{Lehmann(1966)}
Lehmann, E.L. (1966). Some concepts of dependence. Annals of Mathematical Statistics, 37(5):1137-1153.


\bibitem[Mai and Scherer(2014)]{Mai and Scherer(2014)} Mai, J. F., and Scherer, M. (2014). Simulating from the Copula that generates the maximal probability for a joint default under given (inhomogeneous) marginals. Topics in Statistical Simulation. Springer, New York, 333-341.

\bibitem[Mai et al.(2016)]{Mai et al.(2016)} Mai, J. F., Schenk, S., and Scherer, M. (2016). Exchangeable exogenous shock models. Bernoulli, 22:1278-1299.

\bibitem[Marshall and Olkin(1967)]{Marshall and Olkin(1967)} Marshall, A. W., and Olkin, I. (1967). A multivariate exponential distribution. Journal of the American Statistical Association, 62:30-44.

\bibitem[Marshall and Olkin(1979)]{Marshall and Olkin(1979)} Marshall, A. W., and Olkin, I. (1979). Inequalities: theory of majorization and its application, volume 143 of Mathematics in Science and Engineering. Academic Press Inc., New York.


\bibitem[McNeil et al.(2005)]{McNeil et al.(2005)}McNeil A.J., Frey R., and Embrechts P. (2005). Quantitative Risk Management. Princeton University Press,Princeton.


\bibitem[Morillas(2005)]{Morillas(2005)}Morillas, P. M. (2005). A
method to obtain new copulas from a given one. Metrika, 61:169-184.

\bibitem[Navarro et al.(2013)]{Navarro et al.(2013)} Navarro, J., \'{A}guila, Y.D., Sordo, M.A., and Su\'{a}rez-Llorens, A. (2013). Stochastic ordering properties for systems with dependent identically distributed components. Applied Stochastic Models in Business and Industry, 29:264-278.

\bibitem[Navarro and Spizzichino(2010)]{Navarro and Spizzichino(2010)}Navarro, J., and Spizzichino, F. (2010). Comparisons of series and parallel systems with components sharing the same copula. Applied Stochastic Models in Business and Industry, 26:775-791.

\bibitem[Nelsen(2006)]{Nelsen(2006)}Nelsen, R. B. (2006). An introduction
to copulas, 2nd edition. Springer Series in Statistics. Springer, New York.

\bibitem[Olkin and Liu(2003)]{Olkin and Liu(2003)}
Olkin, I., and Liu, R. (2003). A bivariate beta distribution. Statistics and Probability Letters, 62(4):407-412.

\bibitem[Salvadori et al.(2007)]{Salvadori et al.(2007)} Salvadori, G., De Michele, C., Kottegoda, N.T., and Rosso, R. (2007). Extremes in Nature. An Approach Using Copulas, volume 56 of Water Science and Technology Library. Springer, New York.

\bibitem[Stewart(2003)]{Stewart(2003)} Stewart, J. (2003). Calculus: early transcendentals, 6th edition. Thomson Learing, Inc., California.

\bibitem[Sun et al. (2010)]{Sun et al. (2010)} Sun, Y., Mendoza-Arriaga, R., and Linetsky, V. (2010). Marshall-Olkin, distributions, subordinators, efficient simulation, and applications to credit risk. Social Science Electronic Publishing, 58(1):61-3.


\bibitem[Xie et al.(2017)]{Xie et al.(2017)}Xie, J. H., Lin, F., and Yang, J. P. (2017). On a generalization of Archimedean copula family. Statistics and Probability Letters, 125:121-129.

\bibitem[Yaari(1987)]{Yaari(1987)}Yaari, M.E. (1987). The dual theory
of choice under risk. Econometrica, 55(1):95-115.

\end{thebibliography}
\end{document}